\numberwithin{equation}{section}
\theoremstyle{plain}
\newtheorem{theorem}{Theorem}[section]
\newtheorem*{mainthm}{Main Theorem}
\newtheorem{lemma}[theorem]{Lemma}
\newtheorem{proposition}[theorem]{Proposition}
\theoremstyle{definition}
\newtheorem{definition}[theorem]{Definition}
\newtheorem{remark}[theorem]{Remark}
\title{Approximation of $N$-player stochastic games with singular controls by mean field games\\{\em Honoring Prof. Jin Ma's 65th birthday}}
\author{Haoyang Cao\footnote{CMAP, \'Ecole Polytechnique, Route de Saclay, 91128 Palaiseau Cedex, France, \url{haoyang.cao@polytechnique.edu}} \and Xin Guo\footnote{Department of Industrial Engineering and Operations Research, University of California at Berkeley, Berkeley, CA 94720, USA, \url{xinguo@berkeley.edu}} \and Joon Seok Lee\footnote{Laboratoire de Probabilit\'es et Mod\`eles Al\'eatoires, CNRS,  UMR 7599, Universit\'e Paris Diderot, \url{delinbetances@gmail.com}}}
\date{\today}
\begin{document}

\maketitle

\begin{abstract}
	This paper establishes that a class of $N$-player stochastic games with singular controls, either of bounded velocity or of finite variation, can both be approximated by mean field games (MFGs) with singular controls of bounded velocity. More specifically, it shows (i) the optimal control to an MFG with singular controls of a bounded velocity $\theta$ is shown to be an $\epsilon_N$-NE to an $N$-player game with singular controls of the bounded velocity, with $\epsilon_N = O(\frac{1}{\sqrt{N}})$, and (ii) the optimal control to this MFG is an $(\epsilon_N + \epsilon_{\theta})$-NE to an $N$-player game with singular controls of finite variation, where $\epsilon_{\theta}$ is an error term that depends on $\theta$. This work generalizes the classical result on approximation $N$-player games by MFGs, by allowing for discontinuous controls.
\end{abstract}

\section{Introduction}
{$N$}{-player} non-zero-sum stochastic games are notoriously hard to analyze. The theory of Mean Field Games (MFGs), pioneered by \cite{LL2007} and \cite{HMC2006}, presents a powerful approach to study stochastic games of a large population with small interactions. (See the lecture notes and books \cite{BFY2013}, \cite{CDLL2015}, \cite{CarmonaDelarue}, \cite{GLL2010}, and the references therein for more details on MFGs).
The key idea behind MFGs is to avoid directly analyzing the difficult $N$-player stochastic games, and  instead to approximate the dynamics and the objective function via the notion of population's probability distribution flows, a.k.a., mean information processes.  This idea is feasible if an MFG can approximate the corresponding $N$-player game, under proper criteria. The seminal work of  \cite{HMC2006} demonstrated that this is indeed the case, and showed that the value function of an $N$-player game under the criterion of Nash equilibrium (NE) can be approximated by the value function of an associated MFG with an error of order $\frac{1}{\sqrt{N}}$. There are other on higher order error analyses through the central limit theorem and the large deviation principle for MFGs. For instance,  \cite{DLR2018b} and \cite{DLR2019} studied diffusion-based models with common noise via the coupling approach, and \cite{BC2018} and \cite{CP2018}  analyzed finite state space models without common noise using master equations. As such, MFGs provide an elegant and analytically feasible framework to approximate $N$-player stochastic games. 
 
All existing works on approximation of  $N$-player stochastic games by MFGs are established  within the framework of regular controls where controls are absolutely continuous. However, most control problems from engineering and economics are not absolutely continuous, or even continuous. A natural question is, will this relation between the MFG and the $N$-player game hold  when  controls may not be continuous? 
 
The focus of this paper is to establish, within the singular control framework,  the approximation of  $N$-player stochastic games  by their corresponding MFGs.

{\bf MFGs and stochastic games with singular controls.}
Compared with regular controls, singular controls provide a more general and natural  mathematical framework where both the controls and the state space may be discontinuous.
However,  it is well documented that analysis for singular controls is much harder than for regular controls.
From a PDE perspective, the associated fully nonlinear PDE is coupled with possibly state and time dependent gradient constraints. 
From a control perspective, the Hamiltonian for singular controls of  finite variation  diverges \cite{Pham2009} and the standard stochastic maximal principle fails; even in the case of bounded velocity, the Hamiltonian is discontinuous. In contrast, the existence of solutions to MFGs relies on the assumption that the Hamiltonian $H(x,p)$ has sufficient regularity, especially with respect to $p$. For instance, \cite{LL2007} assumed  that $H$ is of class $\mathcal{C}^1$ in $p$,  and 
\cite{Cardaliaguet2013} assumed  that $H$ is of class $\mathcal{C}^2$ and that the second-order derivative with respect to $p$ is Lipschitz continuous. The exception is \cite{Lacker2015}, which established in a general framework  the existence of Markovian equilibrium solutions when controls are continuous but may not be Lipschitz.  
\cite{FH2016} adopted the notion of relaxed controls for the existence of solution to MFGs with singular controls and established its approximation by MFGs with regular controls.

Nevertheless, the question remains as to whether $N$-player games can be approximated by MFGs,  when controls may not be absolutely continuous.

{\bf Our work.} There are two types of singular controls, namely, singular controls of finite variation and singular controls of bounded velocity.  This paper establishes that $N$-player stochastic games with singular controls, {\it both} of finite variation and of bounded velocity, can be approximated under the NE criterion by MFGs with singular controls of bounded velocity.  This result suggests that one may  completely circumvent the more difficult MFGs of singular controls of finite variation, when analyzing stochastic games of singular type, and instead  focus on  singular controls games of bounded velocity. 
  
Indeed,  singular controls of bounded velocity share some nice properties with regular controls and are easier to analyze than singular controls of finite variation. This conviction underlines the main idea in our analysis of the relation between   MFGs and  the associated $N$-player stochastic games. The analysis starts with two basic components. First is the relationship between the underlying singular control problems,  bounded velocity vs finite variation.  Theorem~\ref{thetainfty} shows  that under proper assumptions, the value function of the former converges to that of the latter. Second is on the existence, the uniqueness, and the regularity  for the solution to the MFG with singular controls of bounded velocity, established in Theorem~\ref{mainthm}. These two ingredients lead to the main theorem  on approximation of MFGs to the corresponding $N$-player games. Specifically, (i) given a bounded velocity $\theta$, the optimal control to the MFG with singular controls of bounded velocity   is an $\epsilon_N$-NE to an $N$-player game with singular controls of bounded velocity with $\epsilon_N = O(\frac{1}{\sqrt{N}})$, and (ii) the optimal control to the MFG is  an  $(\epsilon_N + \epsilon_{\theta})$-NE to an $N$-player game with singular controls of finite variation, where $\epsilon_{\theta}$ is an error term that depends on $\theta$.

{\bf Other related work.}
There are earlier works relating singular controls with bounded velocity and with finite variation. For instance, exploiting this relation enables \cite{MT1989} to establish the existence of the optimal singular control of finite variation for a controlled Brownian motion. This relation is also analyzed in \cite{HPY2016} for a monotone follower type of singular controls. None of these works is in a game setting. 
Moreover, to establish the relation between MFGs and $N$-player games in a singular control framework, one needs more explicit construction for the optimal control policies. 

A Markov chain based approximation approach was proposed in \cite{BBC2018} for numerically solving MFGs with reflecting barriers and showed its convergence. Then in \cite{DF2018} 
it was shown that,  under the notion of weak (distributional) NE,  $N$-player stochastic games with singular controls of finite variation can be approximated by that of bounded velocity, if the set of Nash equilibria for the latter is relatively compact under an appropriate topology. The focus and approach of these works are different from ours. 
 
Finally, the existence of Markovian NE solution for MFGs in  Theorem \ref{mainthm} was established in  \cite{Lacker2015} in a more general class of MFGs. His approach is sophisticated and consists of two main steps. The first step is the existence of 
a weak solution under the convexity assumption, and the second step is to go through a measurable selection argument to show that this weak solution is in fact the desirable one. Our approach is  to directly construct the Markov NE using the fixed point approach, based on the special structure of the game.  This  yields more explicit solution structure with additional regularity properties, which are necessary for  the subsequent analysis to connect  MFGs and the associated $N$-player games.

\section{Problem formulations and main results}
\label{setup}

We start with   $(\Omega, \mathcal{F}, \mathbb{F} =(\mathcal{F}_t)_{0\leq t \leq \infty}, P)$  a probability space in which  {$\mathbf{W}^i = \{W_t^i\}_{0\leq t\leq \infty}$} are i.i.d. standard Brownian motion with $i=1,\ldots,N<\infty$. Let 
$\mathcal{P}  (\mathbb{R}) $ be the set of all probability measures on  $\mathbb{R}$, and
 $\mathcal{P}_p (\mathbb{R}) $  be the set of all probability measures of $p$th order on  $\mathbb{R}$. That is 
$$\mathcal{P}_p (\mathbb{R}) = \biggl\lbrace \mu \in \mathcal{P}  (\mathbb{R}) \biggl| \left(\int_\mathbb{R} |x|^p  \mu( dx)\right )^\frac{1}{p}  < \infty \biggr\rbrace.$$
To define the flow of probability measures $\{\mu_t\}_{t\ge 0}$, let us recall the  $p$th order Wasserstein metric on $	\mathcal{P}_p(\mathbb{R})$   defined as 
$$D^p(\mu,\mu')=  \inf_{\tilde{\mu} \in \Gamma(\mu, \mu') }   \limits  \left(\int_{\mathbb{R}\times\mathbb{R}}  |y-y'|^p  \tilde{\mu} (dy,dy')  \right)^{\frac{1}{p}},$$ 
where  $\Gamma(\mu, \mu')$  is the set of all coupling of $\mu $ and $\mu' $.
Denote $C([0, T], \mathcal{P}_2 (\mathbb{R}))$ for all continuous mappings from $[0, T]$ to $\mathcal{P}_2 (\mathbb{R})$. Then  $\mathcal{M}_{[0,T]} \subset C([0, T],\mathcal{P}_2 (\mathbb{R}))$
is a class of flows of probability measures  such that there exists a positive constant $c$ so that    
\begin{align*}
\mathcal{M}_{[0,T]} = \biggl\lbrace \{ \mu_t\}_{0\le t\le T} ~\biggl|~ \sup_{s\neq t}\frac{ D^1(\mu_t,\mu_s) }{|t-s|^{\frac{1}{2}}} \leq c,  \sup_{t \in [0,T]} \int_\mathbb{R} |x|^2 \mu_t(dx) \leq c\biggr\rbrace.
\end{align*}
$\mathcal{M}_{[0,T]}$ is a metric space endowed with the metric 
\begin{align}\label{metric1}
d_\mathcal{M}\biggl(\{\mu_t\}_{0\le t\le T},\{\mu_t'\}_{0\le t\le T}\biggr) = \sup_{0\le t\le T} D^2(\mu_t,\mu_t').
\end{align} 

Throughout, we will use $Lip(\psi)$ as a Lipschitz coefficient of $\psi$ for any given Lipschitz function $\psi$. That is, $|\psi(x)-\psi(y) | \leq Lip(\psi) |x-y|$ for any $x,y \in \mathbb{R}$. {For any $\psi(x)\in\mathcal{C}^2$, we will use
\[\mathcal{L} \psi (x) = b(x) \partial_x \psi (x)+\frac{1}{2} \sigma^2 (x) \partial_{xx} \psi (x),\]
for the infinitesimal generator for any stochastic process 
\[dx_t = b(x_t) dt + \sigma (x_t) dW_t,\]
where $b$ and $\sigma$ are Lipschitz continuous and of linear growth; we say that a function $f$ is of a polynomial growth if  $|f (x)| \leq c(|x|^k+1) $ for some positive constant $c$ and $k\in\mathbb{N}$, for all $x\in\mathbb R$.}

\subsection{Problems of N-player stochastic games and MFGs} 
 
{\bf ${N}$-player game with singular controls of finite variation.}  Fix a  time $T <\infty$ and suppose that there are $N$ {rational and indistinguishable} players in the game.  Denote $ \{x_t^i\}_{s \leq t \leq T}$ as the state process in $\mathbb{R}$ for player $i$ ($i = 1, \ldots, N$), with $x_{s-}^i=x^i$ starting from time $s\in [0,T]$.
Now assume that  the  dynamics of $\{x_t^i\}$ follows,  for $s\le t \le T$,
\begin{align}  \label{nSDE}
 dx_t^i =  \frac{1}{N}\sum_{j=1}^N b_0(x_t^i,x_t^j)    dt + \sigma  dW_t^i +d\xi_t^{i+}-d\xi_t^{i-}, \ \ \ x_{s-}^i=x^i,
 \end{align}
where $b_0: \mathbb{R} \times \mathbb{R} \rightarrow \mathbb{R}$ is bounded, Lipschitz continuous, and $\sigma$ is a positive constant.
Here $   \xi_\cdot^i  = (\xi_\cdot^{i+},\xi_\cdot^{i-}) $ is the control by player  $i$ with $ (\xi_\cdot^{i+},\xi_\cdot^{i-}) $ nondecreasing, c\`adl\`ag,  $\xi_{s-}^{i+}=\xi_{s-}^{i-} = 0$, $\mathbb{E} \biggl [\int_s^T d\xi_t^{i+}  \biggr]< \infty, $ and $\mathbb{E} \biggl[\int_s^T d\xi_t^{i-}  \biggr] < \infty $. 
 
Given Eqn. (\ref{nSDE}), the objective of player $i$ is to minimize, over an appropriate control set $\mathcal{U}^N$, her cost function  $J^{i,N}(s,x^i , \xi_\cdot^{i+},\xi_\cdot^{i-};\xi_\cdot^{ -i} )$. That is

\begin{align}\label{Nsingular}\tag{N-FV}
\begin{split}
\inf_{ (\xi_\cdot^{i +}, \xi_\cdot^{i -}) \in \mathcal{U}^N} J^{i,N}(s,x^i,\xi_\cdot^{i+},\xi_\cdot^{i-};\xi_\cdot^{ -i}) & = \inf_{ (\xi_\cdot^{i +}, \xi_\cdot^{i-}) \in \mathcal{U}^N} \mathbb{E}  \left[ \int_s^T \frac{1}{N}  \sum_{j=1}^N f_0(x_t^i, x_t^j)dt + \gamma_1 d\xi_t^{i+}+ \gamma_2 d\xi_t^{i-}  \right].
 \end{split}
\end{align} 

Here  $\xi_\cdot^{ -i}=\{ (\xi_\cdot^{j +}, \xi_\cdot^{j -})\}_{j=1, j \neq i }^N$  denotes the set of controls  for all the players except for player $i$, the cost function $f_0: \mathbb{R} \times \mathbb{R} \rightarrow \mathbb{R}$ is Lipschitz continuous,  $\gamma_1$ and $\gamma_2  $ are constants, and  

\begin{align*}
\mathcal{U}^N = \biggl\lbrace  (\xi_\cdot^+,\xi_\cdot^-) ~\biggl|~ &  \xi_t^+ \text{ and }\xi_t^- \space \text{ are } \mathcal{F}_t^{(x_t^1,\ldots, x_t^N)} \text{-adapted, c\'adl\'ag, nondecreasing, } \\ & 
 \xi_{s-}^+ = \xi_{s-}^- =0, \mathbb{E} \biggl[\int_s^T d\xi_t^+ \biggl]<\infty, \text{ and } \mathbb{E} \biggl[\int_s^T d\xi_t^-  \biggl] < \infty ,\text{ for } 0\le s\le t\le T  \biggl\rbrace,
\end{align*} 
with  {$\{ \mathcal{F}_t^{(x_t^1,\ldots, x_t^N)}\}_{s\le t\le T}$} the natural filtration of $\{x_t^1,\ldots, x_t^N\}_{s\le t\le T}.$   

{\bf $N$-player game with singular controls of bounded velocity.}
If one restricts  the controls $(\xi_\cdot^{i+},\xi_\cdot^{i-})$ to be with a bounded velocity such that  for a given constant $\theta >0 $,
$$d\xi_t^{i+} = \dot{\xi}_t^{i+}dt, \ \ \  d\xi_t^{i-} = \dot{\xi}_t^{i-} dt, $$ with $0 \leq \dot{\xi}_t^{i+}, \dot{\xi}_t^{i-} \leq \theta$.
Then  game (\ref{Nsingular}) becomes 

\begin{align}\label{Nbound}\tag{N-BD}
\inf_{(\xi_\cdot^{i +}, \xi_\cdot^{i -}) \in \mathcal{U}_{\theta}^N}  J^{i,N}_{\theta} (s,x^i,\xi_\cdot^{i+},\xi_\cdot^{i-};\xi_\cdot^{ -i}  ) & = \inf_{ (\xi_\cdot^{i +}, \xi_\cdot^{i -}) \in \mathcal{U}_\theta^N}  \mathbb{E} \left[ \int_s^T \frac{1}{N}  \sum_{j=1}^N f_0(x_t^i, x_t^j)dt + \gamma_1 \dot{\xi}_t^{i+}dt +\gamma_2\dot{\xi}_t^{i-}dt  \right],
 \\ \text{subject to } \quad  dx_t^i &=  \frac{1}{N}\sum_{j=1}^N b_0(x_t^i,x_t^j)    dt + \sigma  dW_t^i +\dot{\xi}_t^{i+}dt-\dot{\xi}_t^{i-}dt, \ \ \ x_s^i=x^i.
\end{align}

Here the admissible set is given by
$$\mathcal{U}_{\theta}^N = \biggl\lbrace (\xi_\cdot^+,\xi_\cdot^-) ~\biggl|~ (\xi_\cdot^+,\xi_\cdot^-) \in \mathcal{U}^N , 0\le \dot{\xi}_t^+,\dot{\xi}_t^- \le \theta, \text{ for }  0 \le s\le t\le T \biggl\rbrace.$$
 
There are several criteria to analyze stochastic games. Two standard ones are the Pareto optimality and the {Nash equilibrium (NE)}.  In this paper we will focus on  NE. Depending on the problem setting and in particular the admissible controls, there are several forms of Nash equilibria (NEs), including the open loop NE, the closed loop NE, and the closed loop in feedback form NE (a.k.a., the Markovian NE). Throughout the paper, we will consider the Markovian NE. 
Markovian NE means that the controls  are deterministic functions of time $t$,  current state $x_t$, and a fixed measure $\mu_t$. More precisely,

\begin{definition}[Markovian $\epsilon$-Nash equilibrium to (\ref{Nsingular})] 
A Markovian control $(\xi_\cdot^{i*+}, \xi_\cdot^{i*-}) \in \mathcal{U}^N$ for $i = 1,\ldots, N$ is a  \emph{Markovian $\epsilon$-Nash equilibrium} to \emph{(\ref{Nsingular})} if for any $i \in \{1, \ldots, N\}$, any $(s,x) \in [0,T]\times \mathbb{R}$  and any Markovian $(\xi_\cdot^{i'+},\xi_\cdot^{i'-}) \in \mathcal{U}^N$,  $$E_{x_{s-}^{N}}\left[J^{i,N} (s,x_{s-}^{N},\xi_\cdot^{i'+},\xi_\cdot^{i'-};\xi_\cdot^{*-i} )\right] \geq E_{x_{s-}^{N}}\left[J^{i,N} (s,x_{s-}^N,\xi_\cdot^{i*+},\xi_\cdot^{i*-};\xi_\cdot^{*-i} )\right] -\epsilon.$$  
\end{definition}

{\begin{definition}[Markovian $\epsilon$-Nash equilibrium to (\ref{Nbound})] 
A Markovian control $(\xi_\cdot^{i*+}, \xi_\cdot^{i*-}) \in \mathcal{U}_{\theta}^N$ for $i = 1,\ldots, N$ is a  \emph{Markovian $\epsilon$-Nash equilibrium} to \emph{(\ref{Nbound})} if for any $i \in \{1, \ldots, N\}$, any $(s,x) \in [0,T]\times \mathbb{R}$  and any Markovian $(\xi_\cdot^{i'+},\xi_\cdot^{i'-}) \in \mathcal{U}_{\theta}^N$,  $$E_{x_{s,\theta}^N}\left[J^{i,N}_{\theta} (s,x_{s,\theta}^N,\xi_\cdot^{i'+},\xi_\cdot^{i'-};\xi_\cdot^{*-i} )\right] \geq E_{x_{s,\theta}^N}\left[J^{i,N}_{\theta} (s,x_{s,\theta}^N,\xi_\cdot^{i*+},\xi_\cdot^{i*-};\xi_\cdot^{*-i} ) \right]-\epsilon.$$  
\end{definition}}

We will show that both $N$-player games, game (\ref{Nbound}) and game  (\ref{Nsingular}), can be approximated by MFGs with singular controls of bounded velocity, as introduced below.

{\bf MFGs with singular controls of bounded velocity.}
Assume that all $N$ players are identical. That is, for each time $t \in [0,T]$, all $x_t^i$ have the same probability distribution. Define $ \mu_t = \lim_{N \rightarrow \infty} \limits \frac{1}{N} \sum_{i =1}^N \limits \delta_{x_t^i}$ as  a limit of the empirical distributions of $x_t^i$. Then, according to SLLN, as $N\rightarrow \infty$,  
\begin{align*}
&\frac{1}{N}\sum_{j=1}^N b_0(x_t ,x_t^j) \rightarrow  \int_\mathbb{R} b_0(x_t ,y) \mu_t(dy)=b(x_t ,\mu_t) , \ \ \mathbb{P}-a.s.,
\\ &\frac{1}{N}\sum_{j=1}^N f_0(x_t , x_t^j) \rightarrow \int_\mathbb{R} f_0(x_t , y) \mu_t(dy) =f(x_t , \mu_t),\ \ \mathbb{P}-a.s.,
\end{align*} 
subject to appropriate technical conditions. Here $b, f:\mathbb{R} \times  \mathcal{P}_1(\mathbb{R}) \rightarrow \mathbb{R}$  are functions satisfying  assumptions to be specified later.  That is, instead of game (\ref{Nbound}), 
one can solve for a pair of control $\{\xi^*_t\}_{t\in[0,T]}$ and mean information $\{\mu^*_t\}_{t\in[0,T]}$ such that 
\begin{enumerate}
    \item Under $\{\mu^*_t\}_{t\in[0,T]}$, $\{\xi^*_t\}_{t\in[0,T]}=\{(\xi_t^{*,+},\xi_t^{*-})\}_{t\in[0,T]}$ is an optimal strategy for
    \begin{align}\label{MFGbounded1}\tag{MFG-BD}
\begin{split}
v_{\theta} (s,  x|\{\mu^*_t\} )  & :=  \inf_{(\xi_\cdot^{ +},  \xi_\cdot^{ -}) \in \mathcal{U}_{\theta}} J_{\theta} (s,  x , \xi_\cdot^{ +}, \xi_\cdot^{ -} |\{\mu^*_t\}) \\&: = \inf_{ (\xi_\cdot^{+}, \xi_\cdot^{ -}) \in \mathcal{U}_{\theta}} \mathbb{E}_{\mu^*_s}\left[ \int_s^T   \left( f(x_t, \mu_t)+ \gamma_1\dot{\xi}_t^{+}+\gamma_2\dot{\xi}_t^{-} \right) dt|x_s=x\right],
\end{split}
\end{align}
subject to  
\begin{equation}\label{dynamics-bdd}
 dx_t  = \left( b(x_t,\mu_t^*) + \dot{\xi}_t^{+}- \dot{\xi}_t^{-} \right) dt + \sigma  dW_t , \quad x^*_s \sim \mu^*_s,  
\end{equation}
\begin{align*}
\mathcal{U}_{\theta} = \biggl\lbrace (\xi_\cdot^+,\xi_\cdot^-) \biggl|&  \xi_t^+\text{ and }\xi_t^-  \text{ are } \mathcal{F}_t^{(x_{t-})} \text{-adapted, c\'adl\'ag, nondecreasing, } 
 \xi_{s}^+=\xi_{s}^-=0, \\& 0\le \dot{\xi}_t^+,\dot{\xi}_t^-\le \theta,\mathbb{E} \biggl[\int_s^T  d\xi_t^+  \biggl] <\infty ,  \text{ and } \mathbb{E} \biggl[\int_s^T  d\xi_t^-  \biggl] < \infty, \text{ for } 0\le s\le t \le T   \biggl\rbrace,
\end{align*}
with $ \{ \mathcal{F}_t^{(x_{t-})} \}_{s\le t\le T} $  the filtration of $ \{(x_{t-})\}_{s\le t\le T} $. 
When $\theta\to \infty$, we simply write $\mathcal{U}$ instead of  $\mathcal{U}_{\infty}$ for notational simplicity.
\item $\mu_t^*$ is the probability distribution of $x_t^*$ which is given by 
$$  dx_t^{*} = \left( b(x_t^{*},\mu_t^{*}) + \dot{\xi}_t^{*+}- \dot{\xi}_t^{*-} \right) dt + \sigma  dW_t ,\quad s\le t\le T,  \quad x_s^{*} \sim \mu_s^*. $$
\end{enumerate}
Such a pair $(\xi^{*,+}_\cdot,\xi^{*,-}_\cdot)\in\mathcal{U}_{\theta}$ and $\{\mu_t^*\}\in\mathcal{M}_{[0,T]}$ constitute  a solution of \eqref{MFGbounded1}.
\begin{remark}
\label{remark-fixedinitial}
Note here the game value $v_{\theta}(s, x|\{\mu^*_t\})=\inf_{(\xi_\cdot^{ +},  \xi_\cdot^{ -}) \in \mathcal{U}_{\theta}} J_{\theta} (s,  x , \xi_\cdot^{ +}, \xi_\cdot^{ -} |\{\mu^*_t\})$ with $x_s^*=x$ being a sample from $\mu^*_s$. An alternative definition of the game is to solve  
 $\tilde{v}_{\theta}(s, \mu^*_s)$ with $\tilde{v}_{\theta}(s, \mu^*_s)=\mathbb{E}_{\mu^*_s}[v_{\theta}(s, x_s)]$. This game value can be easily  recovered from 
 $v_{\theta}(s,x)$. (See also \cite{GX2019} and \cite[Section 2.2.2]{LZ2018} for a similar set up.)
\end{remark}
For ease of exposition, we will use the following notion of control function, for a fixed $\mu_t$. 
\begin{definition} [Control function]
A control of  bounded velocity $\xi_t$ is called  \emph{Markovian} 
if $ d\xi_t = \dot{\xi}_t dt =   \varphi (t,x_t|\{\mu_t\}) dt$ for some function $\varphi:[0,T]\times \mathbb{R}\rightarrow \mathbb{R}$. $\varphi(t,x_t|\{\mu_t\})$ is called the \emph{control function} for the fixed $\{\mu_t\}$. A control of a finite variation $\xi_t$ is called \emph{Markovian}  if  $ d\xi_t   =  d\varphi(t,x_t|\{\mu_t\})$  for some function $\varphi$. $\varphi$ is called the \emph{control function} for the fixed $\{\mu_t\}$. 
\end{definition}

\subsection{Main results}
The main results are derived based on the following assumptions. 
\begin{itemize}
\item[(A1).] $b_0(x,y)$ and $f_0(x,y)$ are Lipschitz continuous in both $x$ and $y$. That is, $|b_0(x_1,y_1)-b_0(x_2,y_2)|\leq Lip(b_0)(|x_1-x_2|+|y_1-y_2|)$ and $|f_0(x_1,y_1)-f_0(x_2,y_2)|\leq Lip(f_0)(|x_1-x_2|+|y_1-y_2|)$ for some $Lip(b_0),Lip(f_0)>0$. Moreover, $|b_0(x,y)|\leq c_1$ for some $c_1$. $b(x,\mu)$ and $ f(x,\mu) $ are Lipschitz continuous in $x$ and $\mu$, and $b(x,\mu)$ is bounded. That is, $| b(x_1,\mu^1)- b(x_2,\mu^2)| \le Lip(b)( |x_1-x_2| + D^1(\mu^1,\mu^2)) $ for some $Lip(b) >0$, and $| f(x_1,\mu^1)- f(x_2,\mu^2)| \le Lip(f)( |x_1-x_2| + D^1(\mu^1,\mu^2)) $ for some $Lip(f) >0$, and $|b(x,\mu)| \le c_2$ for some $c_2$. 
\item[(A2).] $f(x,\mu) $ has a first-order derivative in $x$ with   $f(x,\mu)$ and $\partial_x f(x,\mu)$ satisfying the polynomial growth condition. Moreover, for any fixed $\mu \in  \mathcal{P}_2(\mathbb{R})$, $f(x,\mu)$ is convex and nonlinear in $x$. Moreover, there exists some constant $c_f$ satisfying $|f(x,\mu)| \leq c_f\biggl(1 + |x|^2+ \int_\mathbb{R} y^2 \mu(dy)   \biggl)$ for any $x\in \mathbb{R}, \mu \in  \mathcal{P}_2(\mathbb{R})$.
 Note that this assumption is well-posed: by definition of $\mathcal{M}_{[0,T]}$, $\mu \in \mathcal{P}_2$.

\item[(A3).]  $b(x,\mu) $ has first- and second-order derivatives with respect to $x$ with uniformly continuous and bounded derivatives in $x$.
\item[(A4).] $-\gamma_1<\gamma_2$.
This ensures the finiteness of the value function. Indeed, take game (\ref{Nsingular}) with $ -\gamma_1	 > \gamma_2$. Then, letting $d\xi_t^{i+} = d\xi_t^{i-} = M $ and $M \rightarrow \infty$, we will have $J^{i,N}\rightarrow -\infty$. 

\item[(A5).] (Monotonicity of the cost function) $f$ satisfies either
\begin{align*}
\mbox{(i).} \int_\mathbb{R} (f(x,\mu^1) - f(x,\mu^2)) (\mu^1 -\mu^2) (dx) \geq 0, \text{ for any } \mu^1 , \mu^2 \in \mathcal{P}_2 (\mathbb{R}) ,
\end{align*}
and $H(x,p ) = \inf_{\dot{\xi}^+,\dot{\xi}^- \in [0,\theta] } \limits \{ (\dot{\xi}^+ -\dot{\xi}^- )p +\gamma_1 \dot{\xi}^+ + \gamma_2 \dot{\xi}^- \} $ satisfies the following condition for any $x,p,q \in \mathbb{R}$ 
\begin{align*}
\text{if }H(x,p+q) - H(x,p) - \partial_p H(x,p) q 	= 0, \text{ then } \partial_p H(x,p+q) = \partial_p H(x,p), \ \ \ \mbox{or}
\end{align*} 
\begin{align*}
\mbox{(ii).} \int_\mathbb{R} (f(x,\mu^1) - f(x,\mu^2)) (\mu^1 -\mu^2) (dx)> 0, \text{ for any } \mu^1 \neq \mu^2 \in \mathcal{P}_2 (\mathbb{R}).
\end{align*}

As in \cite{LL2007, Cardaliaguet2013}, Assumption (A5) is critical  to ensure  the uniqueness  for the solution of (\ref{MFGbounded1}), as will be clear from the proof of Proposition \ref{uniq} for the uniqueness of the fixed point.

\item[(A6).] (Rationality of players) For any control function $ \varphi $, any $t \in [0,T], $ any fixed $\{\mu_t\}$, and any $ x,y \in \mathbb{R}$, $(x-y)\biggl( \varphi (t,x|\{\mu_t\})- \varphi (t,y|\{\mu_t\})\biggl) \leq 0 $. 

Intuitively, this assumption says that the better off the state of an individual player, the less likely the player exercises controls, in order to minimize her cost.  This assumption first appeared in 
\cite{EKPPQ1997} in the analysis of BSDEs.
\end{itemize}

\begin{mainthm}
Assume \emph{(A1)--(A6)}. Then,  
\begin{itemize}
\item[a).] For any fixed $\theta$, the optimal control to \emph{(\ref{MFGbounded1})} is an $\epsilon_{N }$-NE to \emph{(\ref{Nbound})},  given that the distribution of $x_{s,\theta}^N$ at any given initial time $s\in[0,T]$ among $N$ players are permutation invariant. Here  $\epsilon_{N } = O\biggl(\frac{1}{\sqrt{N}}\biggl)$;
\item[b).] The optimal control to \emph{(\ref{MFGbounded1})} is an $(\epsilon_N + \epsilon_\theta)$-NE to  \emph{(\ref{Nsingular})}, given that the distribution of $x_{s}^N$ at any given initial time $s\in[0,T]$ among $N$ players are permutation invariant. Here  $\epsilon_{N } = O\biggl(\frac{1}{\sqrt{N}}\biggl)$, and $\epsilon_\theta \rightarrow 0$ as $\theta \rightarrow \infty $.
\end{itemize} 
\end{mainthm} 

\section{Derivation of the Main Theorem}

The relationship between the stochastic games  (\ref{Nsingular}), (\ref{Nbound}), and (\ref{MFGbounded1}) is built in three steps.  

The first step concerns the analysis of the associated stochastic control problem for (\ref{MFGbounded1}).

\subsection{Control problems}
 
To start, we introduce the underlying stochastic control problems.

 {\bf Control problem of a  bounded velocity.}
Let  $\{\mu_t\} \in \mathcal{M}_{[0,T]}$ be a fixed exogenous flow of probability measures, and consider the following control problem,  
\begin{align} \label{Control}\tag{Control-BD}
\begin{split}
v_{\theta} (s,x |\{\mu_t\}) &  \triangleq \inf_{(\xi_\cdot^{ +},  \xi_\cdot^{ -}) \in \mathcal{U}_{\theta} } J_{\theta}  (s,x , \xi_\cdot^{ +}, \xi_\cdot^{ -}| \{\mu_t\})
\\&  = \inf_{ (\xi_\cdot^{+}, \xi_\cdot^{-})\in \mathcal{U}_{\theta} } \mathbb{E} \left[ \int_s^T   \left( f(x_t, \mu_t)+ \gamma_1 \dot{\xi}_t^{+}+\gamma_2 \dot{\xi}_t^{-} \right) dt  \right],
\end{split}
\end{align}
subject to $dx_t=b(x_t, \mu_t)dt+\sigma dW_t, x_s=x$. 

If  controls are of finite variation, that is, $\theta=\infty$, then we have the following control problem.

{\bf Control problem of finite variation.} 
\begin{align} \label{Control-FV} \tag{Control-FV}
v(s, x |\{\mu_t\} )   & \triangleq \inf_{ (\xi_\cdot^{+}, \xi_\cdot^{-}) \in \mathcal{U}} \mathbb{E} \left[ \int_s^T    f(x_t, \mu_t) dt + \gamma_1 d{\xi}_t^{+}+\gamma_2 d{\xi}_t^{-}    \right], 
\end{align}  
subject to 
\begin{equation*} 
 dx_t  =  b(x_t,\mu_t) dt  + \sigma  dW_t+ d\xi_t^{+}- d\xi_t^{-}  , \quad x_{s-} = x.
\end{equation*}

Note that 
 problem (\ref{Control})
 is a classical stochastic control problem. The associated HJB equation with the terminal condition  is given by
\begin{align}
\begin{split} 
- \partial_t  v_{\theta}  &= \inf_{\dot{\xi}^+,\dot{\xi}^- \in [0,\theta]} \left\lbrace \left( b(x,\mu ) +  (\dot{\xi}^+-\dot{\xi}^-)  \right)  \partial_x v_{\theta}    +  \left(f(x ,\mu ) +\gamma_1\dot{\xi} ^+ + \gamma_2 \dot{\xi}^- \right) \right\rbrace + \frac{\sigma^2}{2}\partial_{xx} v_{\theta} 
\\&=\min \biggl\lbrace  ( \partial_x v_{\theta}+ \gamma_1)\theta,(- \partial_x v_{\theta} + \gamma_2)\theta, 0	\biggl\rbrace  +b(x,\mu )  \partial_x v_{\theta}  +f(x ,\mu )+ \frac{\sigma^2}{2}\partial_{xx} v_{\theta}. 
\\  &\text{with   } v_{\theta} (T, x|\{\mu_t\})=0,  \quad \forall  x \in \mathbb{R} .
\end{split}
\label{HJBHJBHJB}
\end{align}

\begin{proposition}\label{optimization} 
Assume \emph{(A1)--(A4)}. The HJB Eqn. \emph{(\ref{HJBHJBHJB})} has a unique solution $v $ in {$C^{1,2}( [0,T) \times \mathbb{R})\bigcap C( [0,T] \times \mathbb{R})$} with a polynomial growth. Furthermore, this solution is the value function to problem \emph{(\ref{Control})}, and the  corresponding optimal control function is 
\begin{align}\label{optcontrols}
\varphi_\theta (t,x_t|\{\mu_t\})=\dot{\xi}^+_{t,\theta}-\dot{\xi}^-_{t,\theta}   = \left\{
\begin{array}{c l}      
   \theta & \text{if} \quad \partial_x v_{\theta} (t,x_t|\{\mu_t\}) \leq  -\gamma_1,  
  \\ 0 & \text{if}  \quad   -\gamma_1 <  \partial_x v_{\theta} (t,x_t|\{\mu_t\}) <  \gamma_2,
   \\ -\theta & \text{if} \quad  \gamma_2  \leq   \partial_x v_{\theta} (t, x_t|\{\mu_t\}).
\end{array}\right.
\end{align}
 Moreover, the optimal control function $\varphi_\theta (t,x|\{\mu_t\})$ is unique and so is the optimally controlled state process $x_{t,\theta}$ 
with  $$ dx_{t,\theta}  = \biggl( b(x_{t,\theta},\mu_t) +  \varphi_\theta (t,x_{t,\theta}|\{\mu_t\}) \biggl) dt + \sigma  dW_t , \quad x_{s,\theta} = x.$$
\end{proposition}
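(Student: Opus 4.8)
The plan is to show that \eqref{HJBHJBHJB} is a well-posed semilinear parabolic equation and then to run a verification argument identifying its solution with the value function of \eqref{Control}. Freezing the flow, set $\tilde b(t,x):=b(x,\mu_t)$ and $\tilde f(t,x):=f(x,\mu_t)$; by (A1)--(A2) and the fact that $\{\mu_t\}\in\mathcal M_{[0,T]}$ is H\"older-$\tfrac12$ in $t$ for $D^1$, the coefficient $\tilde b$ is bounded, Lipschitz in $x$ and H\"older in $t$, while $\tilde f$ is H\"older in $t$ with the quadratic growth bound of (A2). The infimum in \eqref{HJBHJBHJB} collapses to $F(p):=\min\{(p+\gamma_1)\theta,\,(-p+\gamma_2)\theta,\,0\}$, which is concave, piecewise linear, globally $\theta$-Lipschitz, and $\le 0$; crucially (A4), i.e.\ $-\gamma_1<\gamma_2$, makes the three regimes $\{p\le-\gamma_1\}$, $\{-\gamma_1<p<\gamma_2\}$, $\{p\ge\gamma_2\}$ disjoint and exhaustive, so there is no conflict between pushing up and pushing down. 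Thus \eqref{HJBHJBHJB} reads $-\partial_t v_\theta-\tfrac{\sigma^2}{2}\partial_{xx}v_\theta=\tilde b\,\partial_x v_\theta+F(\partial_x v_\theta)+\tilde f$ with $v_\theta(T,\cdot)=0$, the right-hand side being Lipschitz and of linear growth in $\partial_x v_\theta$.

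For existence, uniqueness and regularity I would argue by localization: solve the equation on bounded strips $[0,T]\times(-R,R)$ with the value-function boundary data, apply interior parabolic Schauder (and $W^{1,2}_p$) estimates---legitimate since once $\partial_x v_\theta$ is H\"older the term $F(\partial_x v_\theta)$ may be treated as an inhomogeneous H\"older source---to get bounds on $\partial_x v_\theta$ and $\partial_{xx}v_\theta$ uniform on compact subsets of $[0,T)\times\mathbb R$, and pass to the limit $R\to\infty$ to produce a solution in $C^{1,2}([0,T)\times\mathbb R)\cap C([0,T]\times\mathbb R)$. Quadratic growth $|v_\theta(s,x)|\le C(1+|x|^2)$ comes a priori from inserting the zero control into $J_\theta$, the standard moment bound for $x_t$ (drift bounded by $c_2+\theta$, $\sigma$ constant), the growth bound on $f$ in (A2), and the bound $(|\gamma_1|+|\gamma_2|)\theta T$ on the control cost; uniqueness within the polynomial-growth class follows from a parabolic comparison principle (via an exponential weight, or via the stochastic representation). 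Equivalently, $v_\theta$ may be represented through the Markovian BSDE with driver $\tilde b(t,x)z+F(z)$ and zero terminal value.

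The verification step is then standard. For any admissible $(\xi^+,\xi^-)\in\mathcal U_\theta$, apply It\^o's formula to $v_\theta(t,x_t)$ along the controlled dynamics; the stochastic integral $\int_s^\cdot\sigma\,\partial_x v_\theta(t,x_t)\,dW_t$ is a true martingale because $\partial_x v_\theta$ has polynomial growth and $x_t$ has finite moments of all orders. Using the pointwise bound $(b+\dot\xi^+-\dot\xi^-)\partial_x v_\theta+f+\gamma_1\dot\xi^++\gamma_2\dot\xi^-\ge\tilde b\,\partial_x v_\theta+F(\partial_x v_\theta)+\tilde f$ together with the PDE gives $v_\theta(s,x)\le J_\theta(s,x,\xi^+,\xi^-\mid\{\mu_t\})$, with equality for the feedback attaining the pointwise minimum. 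Solving $\min_{\dot\xi^\pm\in[0,\theta]}\{\dot\xi^+(p+\gamma_1)+\dot\xi^-(-p+\gamma_2)\}$ and invoking (A4) to exclude both coefficients being negative yields exactly the minimizer $\dot\xi^+_{t,\theta}-\dot\xi^-_{t,\theta}=\varphi_\theta$ of \eqref{optcontrols}, so $v_\theta$ is the value function of \eqref{Control} and $\varphi_\theta$ is optimal. Finally the closed-loop SDE $dx_{t,\theta}=(b(x_{t,\theta},\mu_t)+\varphi_\theta(t,x_{t,\theta}))\,dt+\sigma\,dW_t$ has bounded measurable drift and constant non-degenerate diffusion, hence admits a unique strong solution (pathwise uniqueness by Zvonkin's transformation in one dimension); this gives admissibility of $\varphi_\theta$ and uniqueness of the optimally controlled state process, while the non-uniqueness of the Hamiltonian minimizer on the boundary set $\{\partial_x v_\theta\in\{-\gamma_1,\gamma_2\}\}$ affects neither the value nor---by uniqueness in law of the diffusion---the controlled process.

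The step I expect to be the main obstacle is the regularity claim: $F$ is only Lipschitz (not $C^1$) in $p$, the source $\tilde f$ is unbounded, and the time dependence of the coefficients is merely H\"older-$\tfrac12$, so $C^{1,2}$ regularity cannot be quoted from a single off-the-shelf quasilinear existence theorem and instead requires the localization-plus-bootstrap scheme above combined with the a priori quadratic bound to control behaviour at infinity; a lesser technical point is making the martingale and admissibility bookkeeping in the verification rigorous under only polynomial-growth control of the derivatives of $v_\theta$.
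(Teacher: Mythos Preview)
Your proposal is correct. On the PDE existence/regularity and on verification you are doing what the paper does, only more explicitly: the paper simply cites Fleming--Rishel \cite[Theorem 6.2, Chapter VI]{FR2012} for the unique $C^{1,2}$ polynomial-growth solution of \eqref{HJBHJBHJB} and then says ``standard verification argument'', so your localization/Schauder sketch and your It\^o-based verification are a more detailed version of the same strategy (and your worry about $F$ being only Lipschitz is precisely what the cited Fleming--Rishel result accommodates).

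Where you genuinely diverge is in the uniqueness of the optimally controlled state process. You apply Zvonkin directly to the closed-loop SDE, noting that the full drift $b(\cdot,\mu_t)+\varphi_\theta(t,\cdot)$ is bounded measurable and $\sigma>0$ is constant. The paper instead builds a Picard-type iteration: given $x^n$, it defines $x^{n+1}$ via $dx^{n+1}_t=\bigl(b(x^n_t,\mu_t)+\varphi_\theta(t,x^{n+1}_t)\bigr)\,dt+\sigma\,dW_t$ (invoking Zvonkin only for this decoupled auxiliary SDE), and then shows $x^n\mapsto x^{n+1}$ is an eventual contraction by using that $\varphi_\theta(t,\cdot)$ is nonincreasing in $x$ together with the Lipschitz bound on $b$. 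Your route is shorter and sidesteps a mild forward reference in the paper's argument---the monotonicity of $\varphi_\theta$ in $x$ really comes from the convexity of $v_\theta$, established only in the next proposition. The paper's route, by contrast, foregrounds exactly the monotonicity/comparison structure that is reused throughout the later $N$-player estimates, so each approach buys something: yours is cleaner for this proposition in isolation, the paper's rehearses the mechanism that drives the rest of Section~3.
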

\begin{proof} By~\cite[Theorem 6.2, Chapter VI]{FR2012}, the HJB Eqn. (\ref{HJBHJBHJB}) has a unique solution $w$ in {$C^{1,2}( [0,T) \times \mathbb{R})\bigcap C( [0,T] \times \mathbb{R})$} with a polynomial growth. 
Standard verification argument  will show that it is the value function to problem (\ref{Control}).
Moreover,  the optimal control function  is
$$\varphi_\theta (t,x_t|\{\mu_t\})= \left\{
\begin{array}{c l}      
   \theta & \text{if} \quad \partial_x v_{\theta}  (t,x_{t,\theta}|\{\mu_t\}) \leq  -\gamma_1,  
  \\ 0 & \text{if}  \quad   -\gamma_1 <  \partial_x v_{\theta}  (t,x_{t,\theta}|\{\mu_t\}) <  \gamma_2,
   \\ -\theta & \text{if} \quad    \gamma_2  \leq   \partial_x v_{\theta}  (t,x_{t,\theta}|\{\mu_t\}).
\end{array}\right. 
$$ 
Now, by Proposition \ref{optimization}, there exists a unique value function $v_{\theta} (t,x|\{\mu_t\}) $ to  problem (\ref{Control}). Furthermore, by (\ref{optcontrols}), the optimal control function $\varphi_\theta (t,x|\{\mu_t\})$ is uniquely determined. Let us prove that the optimally controlled state process $x_{t,\theta}$ exists and is unique. 

For any  given fixed $x_{t,\theta}^n$, consider a mapping $\Phi$ such that  $\Phi(x_{t,\theta}^n) = x_{t,\theta}^{n+1}$ where $x_{t,\theta}^{n+1}$ is a solution to the following SDE:
\begin{align} \label{mapeqn}
dx_{t,\theta}^{n+1} & = \biggl( b(x_{t,\theta}^n,\mu_t) +  \varphi_\theta (t,x_{t,\theta}^{n+1}|\{\mu_t\}) \biggl) dt + \sigma  dW_t , \quad x_{s,\theta}^{n+1} = x.
\end{align}
By~\cite{Z1974}, for any given $x_{t,\theta}^n$, the SDE (\ref{mapeqn}) has a unique solution $x_{t,\theta}^{n+1}$, so the mapping $\Phi$ is well defined. Then, for any $n\in \mathbb{N}$, 
\begin{align*}
d(x_{t,\theta}^{n+1}-x_{t,\theta}^{n+2}) = \biggl( b(x_{t,\theta}^n,\mu_t)-b(x_{t,\theta}^{n+1} ,\mu_t) + \varphi_\theta (t,x_{t,\theta}^{n+1 }|\{\mu_t\}) - \varphi_\theta (t,x_{t,\theta}^{n+2} |\{\mu_t\}) \biggl) dt. 
\end{align*}
Because $\varphi_\theta (t,x|\{\mu_t\}) $ is nonincreasing in $x$, 
\begin{align*}
&d(x_{t,\theta}^{n+1}-x_{t,\theta}^{n+2})^2
\\&  =2(x_{t,\theta}^{n+1}-x_{t,\theta}^{n+2})\biggl( b(x_{t,\theta}^n,\mu_t)-b(x_{t,\theta}^{n+1} ,\mu_t) + \varphi_\theta (t,x_{t,\theta}^{n+1}|\{\mu_t\}) - \varphi_\theta (t,x_{t,\theta}^{n+2} |\{\mu_t\}) \biggl) dt
\\&  \leq 2 Lip(b) |x_{t,\theta}^{n+1}-x_{t,\theta}^{n+2}|  | x_{t,\theta}^{n}-x_{t,\theta}^{n+1}| dt
\\&  \leq  Lip(b) \biggl( |x_{t,\theta}^{n+1}-x_{t,\theta}^{n+2}|^2+ | x_{t,\theta}^{n}-x_{t,\theta}^{n+1}|^2 \biggl) dt.
\end{align*}
By Gronwall's inequality, for any $t\in [0,T]$,
\begin{align*}
|x_{t,\theta}^{n+1}-x_{t,\theta}^{n+2}|^2 
\le Lip(b)\exp\biggl( Lip(b)t\biggr) \int_0^t | x_{s,\theta}^{n}-x_{s,\theta}^{n+1}|^2   ds. 
\end{align*}
Hence, for any $n\in\mathbb{N}$,
\begin{align*}
|x_{t,\theta}^{n+1}-x_{t,\theta}^{n+2}|^2 
\le \frac{\biggl(Lip(b)t\biggr)^n \exp\biggl( nLip(b)t\biggr)}{n!}  | x_{t,\theta}^{1}-x_{t,\theta}^{2}|^2 . 
\end{align*}
As $n \rightarrow \infty$, $\Phi$ is a contraction mapping, and the SDE (\ref{mapeqn}) has a unique fixed point solution. Therefore, there exists a unique optimally controlled state process $x_{t,\theta}$ to problem (\ref{Control}). Furthermore, the optimal Markovian control $(\xi_{\cdot,\theta}^+,\xi_{\cdot,\theta}^- )$ to (\ref{Control}) also uniquely exists. 
 \end{proof}
 
 Next, we establish the regularity of the value function to problem (\ref{Control}). 
\begin{proposition}\label{strictconvex}
Assume \emph{(A1)--(A4)}. For any fixed $t \in [0,T]$, the value function  $v_{\theta}  (t, x|\{\mu_t\}) $ 
for problem (\ref{Control}) is strictly convex in $x$.
\end{proposition}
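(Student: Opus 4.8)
The plan is to prove strict convexity of $v_\theta(t,\cdot\,|\{\mu_t\})$ by a perturbation/comparison argument exploiting the linearity of the controlled dynamics in the state and the strict convexity of $f(\cdot,\mu)$ from (A2). Fix $t=s$, take two initial points $x^0\neq x^1$, set $x^\lambda = (1-\lambda)x^0+\lambda x^1$ for $\lambda\in(0,1)$, and let $(\xi^{0,\pm},\xi^{1,\pm})$ be the (unique, by Proposition~\ref{optimization}) optimal controls starting from $x^0,x^1$ respectively, with corresponding state trajectories $x^0_u,x^1_u$ driven by the same Brownian motion $W$. Because the drift $b(\cdot,\mu_u)$ is \emph{not} linear, one cannot simply convex-combine the two trajectories; instead I would use the \emph{convex combination of controls}, $\dot\xi^{\lambda,\pm}_u := (1-\lambda)\dot\xi^{0,\pm}_u + \lambda\dot\xi^{1,\pm}_u$, which again lies in $[0,\theta]$, hence is admissible in $\mathcal{U}_\theta$, and feed it into the SDE from $x^\lambda$ to get a trajectory $x^\lambda_u$. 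The running cost of the controls is linear in $(\dot\xi^+,\dot\xi^-)$, so the $\gamma_1,\gamma_2$ terms combine exactly convexly; the only thing to control is $\int_s^T f(x^\lambda_u,\mu_u)\,du$ versus $(1-\lambda)\int_s^T f(x^0_u,\mu_u)\,du + \lambda\int_s^T f(x^1_u,\mu_u)\,du$.

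The key technical step is a trajectory comparison: I claim that, under this construction, $x^\lambda_u$ stays strictly between $x^0_u$ and $x^1_u$ for a nondegenerate set of times, or at least that $x^\lambda_u = (1-\lambda)x^0_u + \lambda x^1_u + O(\text{error from nonlinearity of }b)$ with the error small. A clean way to handle the nonlinear drift is to linearize: writing $y_u := (1-\lambda)x^0_u+\lambda x^1_u$, the process $y_u$ solves the SDE with the same control $\dot\xi^{\lambda,\pm}$ and with drift $(1-\lambda)b(x^0_u,\mu_u)+\lambda b(x^1_u,\mu_u)$, which differs from $b(y_u,\mu_u)$ by a term bounded (via (A3), $b$ has bounded second $x$-derivative) by $\tfrac12\|\partial_{xx}b\|_\infty\lambda(1-\lambda)|x^1_u-x^0_u|^2$. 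A Gronwall estimate then gives $|x^\lambda_u - y_u|\le C\lambda(1-\lambda)\sup_u|x^1_u-x^0_u|^2$ with $C$ depending only on $T$, $\|\partial_{xx}b\|_\infty$, $Lip(b)$. Since $f$ is convex and Lipschitz in $x$ (A1, A2), $f(x^\lambda_u,\mu_u)\le f(y_u,\mu_u)+Lip(f)|x^\lambda_u-y_u| \le (1-\lambda)f(x^0_u,\mu_u)+\lambda f(x^1_u,\mu_u) + Lip(f)C\lambda(1-\lambda)\sup_u|x^1_u-x^0_u|^2$; combining and using that the controls give the optimal costs on the right, $v_\theta(s,x^\lambda)\le (1-\lambda)v_\theta(s,x^0)+\lambda v_\theta(s,x^1) + Lip(f)C\lambda(1-\lambda)\sup_u|x^1_u-x^0_u|^2$. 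This yields convexity but the error term is the wrong sign for \emph{strict} convexity, so I must extract a strictly negative gain from the strict convexity of $f$ that dominates this $O(\lambda(1-\lambda)|x^1-x^0|^2)$ error.

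To get that gain, the cleaner route is to strengthen (A2)'s "convex and nonlinear" to a quantitative statement — namely that on any bounded region $f(\cdot,\mu)$ is \emph{uniformly} strictly convex, i.e.\ $f(x^\lambda_u,\mu_u)\le (1-\lambda)f(x^0_u,\mu_u)+\lambda f(x^1_u,\mu_u) - \tfrac{m}{2}\lambda(1-\lambda)|x^1_u-x^0_u|^2$ for a modulus $m>0$ uniform over the relevant compact set (which is justified because the optimal trajectories from bounded initial data have bounded moments, by the linear growth of $b$ and boundedness of $\dot\xi$). Since at time $u=s$ we have $x^1_s-x^0_s = x^1-x^0\neq0$ and, by continuity of the SDE flow, $|x^1_u-x^0_u|$ stays bounded below by a positive constant on a time interval $[s,s+\delta]$ of positive length with positive probability, the strict-convexity gain contributes a term $\le -c\,\lambda(1-\lambda)|x^1-x^0|^2$ after integrating; choosing the comparison argument to also bound $\sup_u|x^1_u-x^0_u|^2 \le C'|x^1-x^0|^2$ (again Gronwall, since the drift difference is $Lip(b)$-Lipschitz and the control differences cancel), the quadratic error from $b$'s nonlinearity is also $O(|x^1-x^0|^2)$; but crucially the $f$-gain is localized in a way that, after taking $x^0,x^1$ close (strict convexity is a local/infinitesimal property, and convexity is already established, so it suffices to verify strict convexity along short segments), the negative $f$-term strictly dominates. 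The main obstacle I anticipate is exactly this bookkeeping — showing the strict-convexity surplus from $f$ beats the second-order slack introduced by the nonlinear drift $b$ — and the honest fix is to observe that for $|x^1-x^0|$ small both terms are $O(|x^1-x^0|^2)$ but with computable constants: the $f$-term coefficient is $-\tfrac{m}{2}\cdot(\text{fraction of time the trajectories remain separated})$ while the $b$-slack coefficient is $Lip(f)\cdot\tfrac12\|\partial_{xx}b\|_\infty\cdot(\text{trajectory amplification})$, and one then notes it suffices to prove strict convexity, which is equivalent (given convexity) to ruling out affine behavior on any subinterval; an affine stretch of $v_\theta$ would force the two optimal trajectories to remain exactly affinely related, contradicting the strict nonlinearity forced into the value by the strictly convex $f$ via the HJB equation (\ref{HJBHJBHJB}) — i.e.\ if $v_\theta(s,\cdot)$ were affine near $x$, then $\partial_{xx}v_\theta(s,x)=0$, and plugging into (\ref{HJBHJBHJB}) together with the $C^{1,2}$ regularity from Proposition~\ref{optimization} would propagate $\partial_{xx}v_\theta\equiv 0$ forcing $f(\cdot,\mu)$ to be affine, contradicting (A2). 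I would present the proof via this last HJB-based contradiction as the primary argument, using the perturbation estimate only to supply the needed regularity and to make the contradiction rigorous.
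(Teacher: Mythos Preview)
Your final HJB-based contradiction for strictness is essentially the paper's second step: the paper first shows $v_\theta(s,\cdot)$ is convex, then observes that since $f(\cdot,\mu)$ is nonlinear the HJB solution cannot be linear in $x$, so convex $+$ nonlinear gives strictly convex. On that part the two arguments coincide.

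The divergence --- and the genuine gap in your proposal --- is the convexity step. The paper does it in one line: for any admissible $\xi^1,\xi^2$,
\[
\lambda J_\theta(s,x_1,\xi^1) + (1-\lambda)J_\theta(s,x_2,\xi^2) \ge J_\theta\bigl(s,\lambda x_1+(1-\lambda)x_2,\lambda\xi^1+(1-\lambda)\xi^2\bigr),
\]
by convexity of $f$, and then takes infima. You instead derive
\[
v_\theta(s,x^\lambda) \le (1-\lambda)v_\theta(s,x^0)+\lambda v_\theta(s,x^1) + C\lambda(1-\lambda)\sup_u|x^1_u-x^0_u|^2
\]
and write ``this yields convexity.'' It does not: a \emph{positive} error on the right spoils the convexity inequality itself, not merely its strict version. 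Your repair via uniform strict convexity of $f$ on compacts is not licensed by (A2), which only asserts that $f$ is convex and nonlinear (so $f$ could be piecewise affine, with no uniform modulus $m>0$). Consequently convexity is never established in your argument, and the HJB contradiction --- which upgrades ``convex $+$ not affine on any interval'' to ``strictly convex'' --- cannot be invoked.

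You are correct that a nonlinear $b$ obstructs the identity $x_t^\lambda = \lambda x_t^1 + (1-\lambda)x_t^2$ underlying the paper's displayed inequality; the paper simply applies convexity of $f$ at that level without tracking this discrepancy. But your perturbation route does not close that gap either without adding a hypothesis on $f$ beyond (A2), so the paper's short argument is what you should present, not the Gronwall bookkeeping.
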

\begin{proof}
Fix any $x_1,x_2\in \mathbb{R}$ and any $\lambda \in [0,1]$.  
For any $ (\xi_\cdot^{1,+},  \xi_\cdot^{1,-}) \in \mathcal{U}_{\theta}  $ and $ (\xi_\cdot^{2,+},  \xi_\cdot^{2,-}) \in \mathcal{U}_{\theta} $, by the convexity of $f$, 
\begin{align*}
 & \lambda J_{\theta} (s,x_1 , \xi_\cdot^{1,+}, \xi_\cdot^{1,-}|\{\mu_t\}) + (1-\lambda) {J_{\theta}}  (s,x_2 , \xi_\cdot^{2,+}, \xi_\cdot^{2,-}| \{\mu_t\})
 \\ \geq & J_{\theta} (s,\lambda x_1 + (1-\lambda) x_2 ,\lambda \xi_\cdot^{1,+} + (1-\lambda) \xi_\cdot^{1,+},\lambda\xi_\cdot^{2,+} + (1-\lambda)\xi_\cdot^{2,-}| \{\mu_t\})
 \\\geq & v_{\theta}  (s, \lambda x_1 + (1-\lambda) x_2| \{\mu_t\}). 
\end{align*} 
Since this holds for any $ (\xi_\cdot^{1,+},  \xi_\cdot^{1,-}) \in \mathcal{U}_{\theta} $ and $ (\xi_\cdot^{2,+},  \xi_\cdot^{2,-}) \in \mathcal{U}_{\theta}  $, 
 \begin{align*}
  \lambda  v_{\theta} (s,x_1|\{\mu_t\})  + (1-\lambda) J_{\theta}  (s,x_2 , \xi_\cdot^{2,+}, \xi_\cdot^{2,-}|\{\mu_t\}) 
 \geq   v_{\theta}  (s, \lambda x_1 + (1-\lambda) x_2|\{\mu_t\}),
\end{align*}  
 \begin{align*}
  \lambda  v_{\theta} (s,x_1|\{\mu_t\})  + (1-\lambda)  v_{\theta} (s,x_2| \{\mu_t\})
 \geq   v_{\theta}  (s, \lambda x_1 + (1-\lambda) x_2|\{\mu_t\}). 
\end{align*} 
Hence, $v_{\theta}  (s, x|\{\mu_t\})$ is convex in $x$.
By Proposition \ref{optimization}, $v_{\theta}  (s, x| \{\mu_t\})$ is a $\mathcal{C}^{1,2} ([0,T]\times \mathbb{R})$ solution to the equation 
\begin{align*}
- \partial_t  v_{\theta}  =\min \biggl\lbrace  ( \partial_x v_{\theta} + \gamma_1)\theta,(- \partial_x v_{\theta} + \gamma_2)\theta, 0	\biggl\rbrace  +b(x,\mu )  \partial_x v_{\theta} +f(x ,\mu )+ \frac{\sigma^2}{2}\partial_{xx} v_{\theta}.
\end{align*}
Since $f(x,\mu)$ is not linear in $x$, the solution to this equation is also nonlinear in $x$. Hence, $v_{\theta}  (s, x|\{\mu_t\})$  is strictly convex.  
\end{proof}

With this convexity, we have 
\begin{theorem} \label{thetainfty}
Assume \emph{(A1)--(A4)}. Then for any $(s,x) \in [0,T]\times \mathbb{R}$, as $\theta\rightarrow \infty$, the value function $v_{\theta} (s,x|\{ \mu_t\}) $ of  \emph{(\ref{Control})} converges to the value function $v(s,x|\{ \mu_t\}) $ of \emph{(\ref{Control-FV})}. Moreover, there exists an optimal control of a feedback form for \emph{(\ref{Control-FV})}.
\end{theorem}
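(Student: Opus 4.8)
\emph{Proof plan.} The plan is to show $v_\theta\downarrow v$ as $\theta\uparrow\infty$ by a sandwich argument, and then to produce an optimal feedback control for (\ref{Control-FV}) from the variational inequality obtained in the limit, using the convexity from Proposition~\ref{strictconvex}.

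\emph{Step 1: monotonicity and the trivial bound.} Since $\mathcal{U}_\theta\subseteq\mathcal{U}_{\theta'}\subseteq\mathcal{U}$ whenever $\theta\le\theta'$, the map $\theta\mapsto v_\theta(s,x\,|\,\{\mu_t\})$ is nonincreasing and bounded below by $v(s,x\,|\,\{\mu_t\})$, so $\bar v(s,x):=\lim_{\theta\to\infty}v_\theta(s,x\,|\,\{\mu_t\})$ exists and $\bar v\ge v$ pointwise. It then remains to prove $\bar v\le v$.

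\emph{Step 2: $\bar v\le v$ by mollification of finite-variation controls.} Fix $(s,x)$ and $\varepsilon>0$ and pick $(\xi^+_\cdot,\xi^-_\cdot)\in\mathcal{U}$ with $J(s,x,\xi^+,\xi^-\,|\,\{\mu_t\})\le v(s,x\,|\,\{\mu_t\})+\varepsilon$. Freezing the control after the first time $\xi^+_t+\xi^-_t$ exceeds a level $M$ and letting $M\to\infty$ (dominated convergence applies: $f$ is globally Lipschitz in $x$ by (A1), the controlled state admits an $M$-free integrable dominating function, and $\mathbb{E}[\int_s^Td\xi^\pm_t]<\infty$), and then moving any jump at time $T$ slightly inside $[s,T)$, I may assume $\xi^+_T+\xi^-_T\le M$ a.s.\ and $\xi^\pm$ continuous at $T$. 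Define the causal averages $\xi^{\pm,\delta}_t:=\tfrac1\delta\int_{(t-\delta)\vee s}^t\xi^\pm_r\,dr$ with $\xi^\pm_r:=0$ for $r<s$; these are nondecreasing, Lipschitz in $t$ with velocity $\le M/\delta=:\theta(\delta)$, hence lie in $\mathcal{U}_{\theta(\delta)}$. Writing $x^\delta$ for the state driven by $(\xi^{+,\delta},\xi^{-,\delta})$ and $A_t:=(\xi^{+,\delta}_t-\xi^+_t)-(\xi^{-,\delta}_t-\xi^-_t)$, Gronwall's inequality in integrated form applied to $|x^\delta_t-x_t|\le Lip(b)\int_s^t|x^\delta_r-x_r|\,dr+|A_t|$ gives $\mathbb{E}\int_s^T|x^\delta_t-x_t|\,dt\le C_T\,\mathbb{E}\int_s^T|A_t|\,dt$, which tends to $0$ since $\mathbb{E}\int_s^T|\xi^{\pm,\delta}_t-\xi^\pm_t|\,dt\to0$ (the ramp regions have total length $O(\delta M)$). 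Global Lipschitzness of $f$ then yields $\mathbb{E}\int_s^Tf(x^\delta_t,\mu_t)\,dt\to\mathbb{E}\int_s^Tf(x_t,\mu_t)\,dt$, and continuity at $T$ gives $\mathbb{E}[\gamma_1\xi^{+,\delta}_T+\gamma_2\xi^{-,\delta}_T]\to\mathbb{E}[\gamma_1\xi^+_T+\gamma_2\xi^-_T]$, so $J_{\theta(\delta)}(s,x,\xi^{+,\delta},\xi^{-,\delta}\,|\,\{\mu_t\})\to J(s,x,\xi^+,\xi^-\,|\,\{\mu_t\})$. Since $\bar v(s,x)\le v_{\theta(\delta)}(s,x\,|\,\{\mu_t\})\le J_{\theta(\delta)}(s,x,\xi^{+,\delta},\xi^{-,\delta}\,|\,\{\mu_t\})$, letting $\delta\to0$ and then $\varepsilon\to0$ gives $\bar v\le v$, and hence $v_\theta(s,x\,|\,\{\mu_t\})\to v(s,x\,|\,\{\mu_t\})$.

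\emph{Step 3: an optimal feedback control for (\ref{Control-FV}).} The limit $v$ is convex in $x$ (pointwise limit of the convex $v_\theta$ of Proposition~\ref{strictconvex}), strictly convex by the same reasoning as there (nonlinearity of $f$), Lipschitz in $x$ uniformly in $\theta$, and — by stability of viscosity solutions under the monotone limit $v_\theta\downarrow v$, where the scaled obstacle terms $(\partial_xv_\theta+\gamma_1)\theta$ and $(-\partial_xv_\theta+\gamma_2)\theta$ in (\ref{HJBHJBHJB}) force the gradient constraint $-\gamma_1\le\partial_xv\le\gamma_2$ in the limit — it is the viscosity solution of $\min\{\partial_tv+b(x,\mu)\partial_xv+\tfrac{\sigma^2}{2}\partial_{xx}v+f(x,\mu),\ \partial_xv+\gamma_1,\ \gamma_2-\partial_xv\}=0$ with $v(T,\cdot)=0$, enjoying the $C^1$-in-$x$ regularity (smooth fit) typical of such obstacle problems. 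By strict convexity and (A4), for each $t$ the continuation region $\{x:-\gamma_1<\partial_xv(t,x)<\gamma_2\}$ is an interval $(\underline\ell(t),\overline\ell(t))$ with $\underline\ell(t)<\overline\ell(t)$. The candidate optimal policy is then to run $dx_t=b(x_t,\mu_t)\,dt+\sigma\,dW_t$ reflected so as to keep $x_t\in[\underline\ell(t),\overline\ell(t)]$, taking $\xi^+$ (resp.\ $\xi^-$) to be the upward (resp.\ downward) Skorokhod term; existence of this reflected process follows from solvability of the Skorokhod problem in a time-dependent interval, and the resulting control is a function of time and current state, i.e.\ of feedback form. Optimality follows by verification: applying It\^o's formula to $v(t,x_t)$, the variational inequality makes the $dt$-term equal to $-f(x_t,\mu_t)$ (the reflected state spends zero Lebesgue time at the boundary) while $(\partial_xv+\gamma_1)\,d\xi^+_t\equiv0$ and $(\gamma_2-\partial_xv)\,d\xi^-_t\equiv0$ on the supports of the reflection terms, giving $J(s,x,\xi^+,\xi^-\,|\,\{\mu_t\})=v(s,x\,|\,\{\mu_t\})$; for any other admissible $(\eta^+,\eta^-)$ the same computation with the one-sided inequalities $\partial_tv+b\partial_xv+\tfrac{\sigma^2}{2}\partial_{xx}v+f\ge0$, $\partial_xv\ge-\gamma_1$, $\partial_xv\le\gamma_2$ (also across jumps) gives $v(s,x\,|\,\{\mu_t\})\le J(s,x,\eta^+,\eta^-\,|\,\{\mu_t\})$.

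\emph{Main obstacle.} Step~2 is essentially routine, the key simplification being that (A1) makes $f$ globally Lipschitz in $x$, so no polynomial-growth bookkeeping is needed. The substantive part is Step~3: identifying $v$ as a sufficiently regular solution of the variational inequality, solving the Skorokhod problem at the free boundaries $\underline\ell(t),\overline\ell(t)$ (whose regularity in $t$ must be good enough for the reflected diffusion to be well posed), and justifying the verification computation. It is precisely the convexity supplied by Proposition~\ref{strictconvex} that collapses the geometry to reflection in an interval and makes this feasible.
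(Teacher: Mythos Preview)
Your proposal is correct. For the convergence $v_\theta\to v$, you follow essentially the same route as the paper: approximate an arbitrary finite-variation control by bounded-velocity controls, push the state through a Gronwall estimate, and push the cost through the Lipschitz continuity of $f$; your explicit construction via truncation and causal averaging is in fact more careful than the paper's one-line appeal to a.e.\ differentiability of finite-variation paths. For the existence of an optimal feedback control for (\ref{Control-FV}), the paper simply cites Corollary~(4.11) of Menaldi--Taksar~\cite{MT1989} without further argument, whereas you sketch the underlying mechanism (limiting variational inequality, interval geometry from strict convexity, Skorokhod reflection, verification); the free-boundary regularity you flag as the main obstacle is precisely what that reference supplies.
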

\begin{proof}Fix $\{\mu_t\} \in \mathcal{M}_{[0,T]}$. 
For any $(\zeta_{\cdot}^+,\zeta_{\cdot}^-) \in \mathcal{U}$, since each path of  a finite variation process is almost everywhere differentiable, there exists a sequence of  bounded velocity functions which converges to the path as $\theta \rightarrow \infty$. Hence, there exists a sequence  $\{(\zeta_{\cdot,\theta}^+ ,\zeta_{\cdot,\theta}^-)\}_{\theta \in [0,\infty)}$ such that  $(\zeta_{\cdot,\theta}^+,\zeta_{\cdot,\theta}^- ) \in \mathcal{U}_{\theta}$ and   $\mathbb{E} \int_0^T  |\dot{\zeta}_{t,\theta}^+ dt - d\zeta_{t}^+ | \rightarrow 0 ,  \mathbb{E} \int_0^T  |\dot{\zeta}_{t,\theta}^- dt - d\zeta_{t}^-  |   \rightarrow 0$ as $\theta \rightarrow \infty$.

Define $\epsilon_\theta$ as
\begin{align}\label{epsilontheta}
\epsilon_\theta =  O\biggl(  \mathbb{E}\int_0^T |\dot{\zeta}_{t,\theta}^+dt  -   d\zeta_{t}^{+} |+ \mathbb{E}\int_0^T  |\dot{\zeta}_{t,\theta}^- dt-d\zeta_{t}^{-}|   \biggl),
\end{align}
and $\epsilon_\theta \rightarrow 0$ as $\theta \rightarrow \infty$.  

Denote 
\begin{align*}
d\hat{x}_{t,\theta} & = (b(\hat{x}_{t,\theta}, \mu_t ) +\dot{\zeta}_{t,\theta}^+ - \dot{\zeta}_{t,\theta}^-)  dt + \sigma dW_t, \quad \hat{x}_{s,\theta}= x, \text{ and }  
\\   d\hat{x}_t  & = b(\hat{x}_t,\mu_t) dt+ \sigma  dW_t + d\zeta_{t}^{+}- d\zeta_{t}^{-}  , \quad \quad \hat{x}_{s-} = x.
\end{align*}
Then, for any $\tau \in [s,T]$, 
\begin{align*}
|\hat{x}_{\tau,\theta} - \hat{x}_\tau| &\le \int_s^\tau | b(\hat{x}_{t,\theta},\mu_t) - b(\hat{x}_t,\mu_t)| dt + \int_s^\tau |\dot{\zeta}_{t,\theta}^+dt  -   d\zeta_{t}^{+} |+ \int_s^\tau  |\dot{\zeta}_{t,\theta}^- dt - d\zeta_{t}^{-}|
\\  & \le   \int_s^\tau Lip(b)|\hat{x}_{t,\theta} - \hat{x}_t| dt + \int_s^\tau |\dot{\zeta}_{t,\theta}^+dt  -   d\zeta_{t}^{+} |+ \int_s^\tau  |\dot{\zeta}_{t,\theta}^- dt - d\zeta_{t}^{-}| .
\end{align*}
By  Gronwall's inequality, 
\begin{align*}
\mathbb{E} |\hat{x}_{\tau,\theta} - \hat{x}_\tau| \le O\left(\mathbb{E}\int_0^\tau |\dot{\zeta}_{t,\theta}^+dt  -   d\zeta_{t}^{+} |+ \mathbb{E}\int_0^\tau  |\dot{\zeta}_{t,\theta}^- dt - d\zeta_{t}^{-}| \right).
\end{align*}
Consequently, 
\begin{align*}
& \biggl|J_(s,x,\zeta_{t}^+,\zeta_{t}^- |\{\mu_t\} ) - J_{\theta}  (s,x,\zeta_{t,\theta}^+,\zeta_{t,\theta}^-|\{\mu_t\} )\biggl| 
 \\  \le & \ \mathbb{E}\biggl[ \biggl| \int_s^T f(\hat{x}_t, \mu_t) - f(\hat{x}_{t,\theta},\mu_t) +\gamma_1 d\zeta_{t}^+ + \gamma_2  d\zeta_{t}^- - \gamma_1  \dot{\zeta}_{t,\theta}^+dt -  \gamma_2 \dot{\zeta}_{t,\theta}^-dt \biggl| \biggl]
  \\  \le & \ \mathbb{E}\biggl[ \int_s^T  Lip(f)|\hat{x}_t -\hat{x}_{t,\theta}|     + \gamma_1 |d\zeta_{t}^+ -\dot{\zeta}_{t,\theta}^+ dt| +  \gamma_2 |d\zeta_{t}^- -  \dot{\zeta}_{t,\theta}^-dt |  \biggl]
  \\\le & \ O\biggl(\mathbb{E}\int_0^T |\dot{\zeta}_{t,\theta}^+dt  -   d\zeta_{t}^{+} |+ \mathbb{E}\int_0^T  |\dot{\zeta}_{t,\theta}^- dt-d\zeta_{t}^{-}| \biggl).
\end{align*}
Therefore, 
$\biggl |v (s,x|\{ \mu_t\}) -  v_{\theta} (s,x|\{ \mu_t\})\biggl|  \rightarrow 0  \text{ as } \theta \rightarrow 0$.

Now a similar argument as in Corollary (4.11) \cite{MT1989} shows the existence of a feedback control for 
\emph{(\ref{Control-FV})}.
\end{proof}

\subsection{Game (MFG-BD) }\label{proof} 
Our next step is to analyze the game (MFG-BD).  In particular, we see that 
\begin{theorem}
Assume \emph{(A1)--(A6)}. Then there exists a unique solution $((\xi_\cdot^{*+},\xi_\cdot^{*-}),\{\mu_t^*\} )$ of \emph{(\ref{MFGbounded1})}. Moreover, the corresponding value function 
$v_{\theta}(s,x)$ for \emph{(\ref{MFGbounded1})} is in {$C^{1,2}( [0,T) \times \mathbb{R})\bigcap C( [0,T] \times \mathbb{R})$} with  a polynomial growth.
\label{mainthm}
\end{theorem}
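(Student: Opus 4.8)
The plan is to solve \eqref{MFGbounded1} by a fixed point argument on the space $\mathcal{M}_{[0,T]}$, feeding off the control–problem results of Propositions~\ref{optimization} and~\ref{strictconvex}. Fix the initial time $s$ and the initial law $\mu^*_s=m\in\mathcal{P}_2(\mathbb{R})$, and let $\mathcal{M}^m_{[0,T]}=\{\{\mu_t\}\in\mathcal{M}_{[0,T]}:\mu_s=m\}$. For each exogenous flow $\{\mu_t\}\in\mathcal{M}^m_{[0,T]}$, Proposition~\ref{optimization} provides a unique value function $v_\theta(\cdot,\cdot|\{\mu_t\})$, a unique optimal feedback control function $\varphi_\theta(\cdot,\cdot|\{\mu_t\})$ given by \eqref{optcontrols}, and (running the pathwise uniqueness argument behind \eqref{mapeqn} and integrating over the initial law $m$) a unique optimally controlled state process $x_{\cdot,\theta}$ with $x_{s,\theta}\sim m$. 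Define $\Gamma\colon\{\mu_t\}\mapsto\{\mathrm{Law}(x_{t,\theta})\}_{t\in[s,T]}$. First I would check $\Gamma$ maps $\mathcal{M}^m_{[0,T]}$ into itself once the constant $c$ in the definition of $\mathcal{M}_{[0,T]}$ is chosen large enough: $|b|\le c_2$ together with $|\dot\xi^{\pm}|\le\theta$ and an It\^o/Gronwall estimate bound $\sup_t\mathbb{E}|x_{t,\theta}|^2$ uniformly, while the same bounds give $D^1(\mathrm{Law}(x_{t,\theta}),\mathrm{Law}(x_{r,\theta}))\le\mathbb{E}|x_{t,\theta}-x_{r,\theta}|\le C(|t-r|+|t-r|^{1/2})$, which is the required $\tfrac12$-H\"older control; the time-$s$ marginal of $\Gamma\{\mu_t\}$ is $m$ by construction.

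\emph{Existence.} I would invoke Schauder's fixed point theorem. The set $\mathcal{M}^m_{[0,T]}$ is convex (Wasserstein distances and second moments are jointly convex in their arguments) and compact in $C([0,T],\mathcal{P}_2(\mathbb{R}))$ under $d_\mathcal{M}$ (the uniform second–moment bound gives tightness of all marginals, the uniform $\tfrac12$-H\"older bound gives equicontinuity). It then remains to prove $\Gamma$ is continuous. If $\{\mu^n_t\}\to\{\mu_t\}$ in $d_\mathcal{M}$, then by (A1) the coefficients $b(\cdot,\mu^n_\cdot)$ and $f(\cdot,\mu^n_\cdot)$ converge with uniform Lipschitz and growth control, so by stability of solutions of the HJB equation \eqref{HJBHJBHJB} one gets $v_\theta(\cdot,\cdot|\{\mu^n_t\})\to v_\theta(\cdot,\cdot|\{\mu_t\})$ in $C^1_{\mathrm{loc}}$, in particular $\partial_x v_\theta(\cdot,\cdot|\{\mu^n_t\})\to\partial_x v_\theta(\cdot,\cdot|\{\mu_t\})$ locally uniformly. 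Since $v_\theta$ is strictly convex in $x$ (Proposition~\ref{strictconvex}), $\partial_x v_\theta(t,\cdot)$ is strictly increasing, so the switching sets $\{x:\partial_x v_\theta=-\gamma_1\}$ and $\{x:\partial_x v_\theta=\gamma_2\}$ are Lebesgue–null; hence the bang-bang regions in \eqref{optcontrols} converge and $\varphi_\theta(\cdot,\cdot|\{\mu^n_t\})\to\varphi_\theta(\cdot,\cdot|\{\mu_t\})$ off a null set. A Gronwall estimate on the controlled SDEs---legitimate because (A6) makes $\varphi_\theta$ nonincreasing in $x$, keeping the SDE well posed exactly as in Proposition~\ref{optimization}---yields $\mathbb{E}\sup_t|x^n_{t,\theta}-x_{t,\theta}|\to0$, hence $d_\mathcal{M}(\Gamma\{\mu^n_t\},\Gamma\{\mu_t\})\to0$. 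Schauder then produces a fixed point $\{\mu^*_t\}$, and $\bigl((\xi^{*+}_\cdot,\xi^{*-}_\cdot),\{\mu^*_t\}\bigr)$ with $\dot\xi^{*+}_t-\dot\xi^{*-}_t=\varphi_\theta(t,x^*_{t,\theta}|\{\mu^*_t\})$ solves \eqref{MFGbounded1}.

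\emph{Uniqueness and regularity.} For uniqueness I would run the Lasry--Lions monotonicity argument enabled by (A5). Given two solutions with flows $\{\mu^1_t\}$, $\{\mu^2_t\}$ and value functions $v^i=v_\theta(\cdot,\cdot|\{\mu^i_t\})$ solving \eqref{HJBHJBHJB} with Hamiltonian $H(x,p)=\min\{(p+\gamma_1)\theta,(-p+\gamma_2)\theta,0\}$, and using that each $\mu^i$ solves the associated Fokker--Planck equation (meaningful thanks to (A6), which tames the discontinuous drift $\varphi_\theta$), I would differentiate $\int_\mathbb{R}(v^1-v^2)(\mu^1_t-\mu^2_t)(dx)$ in $t$, integrate over $[s,T]$, and use $v^i(T,\cdot)=0$ together with the common initial marginal $m$ to obtain that a sum of nonnegative terms vanishes: the two convexity defects $\int_s^T\!\!\int_\mathbb{R}\bigl(H(x,\partial_x v^1)-H(x,\partial_x v^2)-\partial_p H(x,\partial_x v^2)(\partial_x v^1-\partial_x v^2)\bigr)\,\mu^2_t(dx)\,dt\ge0$ and its symmetric counterpart, and $\int_s^T\!\!\int_\mathbb{R}(f(x,\mu^1_t)-f(x,\mu^2_t))(\mu^1_t-\mu^2_t)(dx)\,dt\ge0$ by (A5)(i) (or $>0$ strictly by (A5)(ii) unless $\mu^1_t\equiv\mu^2_t$). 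Under (A5)(ii) this forces $\mu^1_t\equiv\mu^2_t$; under (A5)(i) the extra clause on $H$ upgrades the vanishing convexity defect to $\partial_p H(x,\partial_x v^1)=\partial_p H(x,\partial_x v^2)$, i.e.\ the two feedback controls agree, which closes the loop to $\mu^1_t\equiv\mu^2_t$; uniqueness of the optimal control and state then follows from Proposition~\ref{optimization}. Finally, the regularity claim is immediate: for the fixed point, $v_\theta(s,x)=v_\theta(s,x|\{\mu^*_t\})$ is the value function of \eqref{Control} with admissible exogenous flow $\{\mu^*_t\}\in\mathcal{M}_{[0,T]}$, so Proposition~\ref{optimization} gives $v_\theta\in C^{1,2}([0,T)\times\mathbb{R})\cap C([0,T]\times\mathbb{R})$ with polynomial growth. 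The two main obstacles I anticipate are the continuity of $\Gamma$, where the bang-bang structure of $\varphi_\theta$ forces one to control the optimal-control map through $C^1$-stability of $v_\theta$ in the measure argument and negligibility of the switching sets---precisely what Proposition~\ref{strictconvex} is for---and the uniqueness step, where the monotonicity of $f$, the convexity of $H$ in $p$, the nondegeneracy clause of (A5), and (A6) must be combined to handle the discontinuous feedback inside the Fokker--Planck manipulation.
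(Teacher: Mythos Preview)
Your proposal is correct and follows essentially the same architecture as the paper: define the best-response map $\Gamma$ on $\mathcal{M}_{[0,T]}$ via Proposition~\ref{optimization}, verify it is a self-map (Proposition~\ref{MM}), prove continuity (Proposition~\ref{continuous}) using strict convexity (Proposition~\ref{strictconvex}) to handle the bang-bang switching sets, apply Schauder with relative compactness of the range, and conclude uniqueness from the Lasry--Lions monotonicity argument under (A5) (Proposition~\ref{uniq}). The only noteworthy difference in execution is the continuity step: where you invoke an abstract $C^1_{\mathrm{loc}}$ stability of the HJB solution in the measure argument, the paper instead carries out explicit BSDE-type estimates---applying It\^o's formula to $v_\theta^n$ along the optimally controlled process for $\{\mu_t\}$, comparing Hamiltonians, and bootstrapping on short time intervals of length $\delta=\sigma^2/(54\theta^2)$---to obtain $\partial_x v_\theta^n\to\partial_x v_\theta$ directly.
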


The proof of the existence of the MFG solution proceeds as follows. 

First, from Proposition \ref{optimization} we see that for any given fixed $\{\mu_t\}$ there exists a unique optimal control function as $\varphi_\theta(t,x |\{\mu_t\} ) $. Now, one  can define a mapping $\Gamma_1 $ from
$\mathcal{M}_{[0,T]}$ to a class of pairs of the optimal control function $\varphi_{\theta}$ and the fixed flow of probability measures $\{\mu_t\}$ such that  $$\Gamma_1 (\{\mu_t\}) = \biggl( \varphi_\theta(t,x|\{\mu_t\}) , \{\mu_t\}\biggl).$$
Moreover, by Proposition \ref{optimization} the optimally controlled process  $x_{t,\theta} $ under the fixed $\{\mu_t\}$ exists uniquely with 
\begin{align*} 
d x_{t,\theta}  = \biggl( b(x_{t,\theta},\mu_t) +  \varphi_\theta(t,x_{t,\theta}|\{\mu_t\}) \biggl)  dt + \sigma dW_t, \quad \quad x_{s,\theta} = x.
\end{align*} 
    Consequently, we can define $\Gamma_2 $  so that  $$\Gamma_2 \biggl( \varphi_\theta(t,x|\{\mu_t\}), \{\mu_t\}\biggl) = \{ \tilde{\mu}_t \} ,$$ where $ \tilde{\mu}_t $ is the probability measure of $x_{t,\theta}$ for each $t\in [0,T]$. 

Now,  define a mapping $\Gamma$ as $$\Gamma(\{ \mu_t\})= \Gamma_2 \circ \Gamma_1 (\{\mu_t\}) = \{ \tilde{\mu}_t\}.$$ We will use the Schauder fixed point theorem~\cite[Theorem 4.1.1]{Smart1980} to show the existence of a fixed point. 
 The key is to prove that $\Gamma$ is a continuous mapping of $\mathcal{M}_{[0,T]} $ into $  \mathcal{M}_{[0,T]}$, and the range of $\Gamma$ is relatively compact \cite{B2013}.

\begin{proposition}\label{MM}
 Assume \emph{(A1)--(A4)}.
$\Gamma$ is a mapping from $\mathcal{M}_{[0,T]}$ to $\mathcal{M}_{[0,T]}$.
\end{proposition}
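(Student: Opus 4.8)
The plan is to exploit the explicit description of the optimally controlled state process provided by Proposition~\ref{optimization}. Fix $\{\mu_t\}\in\mathcal{M}_{[0,T]}$. By Proposition~\ref{optimization} there is a unique optimal control function $\varphi_\theta(t,x|\{\mu_t\})$ and a unique optimally controlled process $x_{t,\theta}$ solving $dx_{t,\theta}=\bigl(b(x_{t,\theta},\mu_t)+\varphi_\theta(t,x_{t,\theta}|\{\mu_t\})\bigr)dt+\sigma\,dW_t$, which we start from $x_{0,\theta}\sim\mu_0$; writing $\tilde\mu_t$ for the law of $x_{t,\theta}$, we have $\Gamma(\{\mu_t\})=\{\tilde\mu_t\}$. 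We must check three things: (i) $\sup_{t\in[0,T]}\int_{\mathbb{R}}|y|^2\,\tilde\mu_t(dy)<\infty$ with an admissible constant; (ii) $\sup_{s\neq t}D^1(\tilde\mu_s,\tilde\mu_t)/|t-s|^{1/2}$ is bounded by an admissible constant; and (iii) $t\mapsto\tilde\mu_t$ is continuous into $\mathcal{P}_2(\mathbb{R})$. The structural input that makes all of this routine is formula~(\ref{optcontrols}): the optimal control function takes values in $\{-\theta,0,\theta\}$, so $|\varphi_\theta(t,\cdot|\{\mu_t\})|\le\theta$; together with $|b|\le c_2$ from (A1), the drift of $x_{t,\theta}$ is bounded by $c_2+\theta$ and its diffusion coefficient is the constant $\sigma$.

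For (i), I would write $x_{t,\theta}=x_{0,\theta}+\int_0^t(b+\varphi_\theta)\,dr+\sigma W_t$ and take second moments (or, for a sharper constant, apply It\^o to $|x_{t,\theta}|^2$ and use Gronwall, noting $\mathbb{E}[x_{t,\theta}(b+\varphi_\theta)]\le(c_2+\theta)\,\mathbb{E}|x_{t,\theta}|$). Since $\{\mu_t\}\in\mathcal{M}_{[0,T]}$ gives $\int_{\mathbb{R}}|x|^2\mu_0(dx)<\infty$, this yields $\sup_{t\in[0,T]}\mathbb{E}|x_{t,\theta}|^2\le C$ with $C$ depending only on $c_2,\theta,\sigma,T$ and the second moment of $\mu_0$. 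For (ii) and (iii), for $0\le s\le t\le T$ the pair $(x_{s,\theta},x_{t,\theta})$ is a coupling of $(\tilde\mu_s,\tilde\mu_t)$, so $D^p(\tilde\mu_s,\tilde\mu_t)\le\bigl(\mathbb{E}|x_{t,\theta}-x_{s,\theta}|^p\bigr)^{1/p}$. From $x_{t,\theta}-x_{s,\theta}=\int_s^t(b+\varphi_\theta)\,dr+\sigma(W_t-W_s)$ and $|b+\varphi_\theta|\le c_2+\theta$ we get $|x_{t,\theta}-x_{s,\theta}|\le(c_2+\theta)|t-s|+\sigma|W_t-W_s|$; taking expectations with $\mathbb{E}|W_t-W_s|=\sqrt{2|t-s|/\pi}$ and $|t-s|\le\sqrt{T}\,|t-s|^{1/2}$ gives $D^1(\tilde\mu_s,\tilde\mu_t)\le C_1|t-s|^{1/2}$, which is (ii), while $p=2$ with $\mathbb{E}|W_t-W_s|^2=|t-s|$ gives $D^2(\tilde\mu_s,\tilde\mu_t)^2\le C_2|t-s|$, which combined with (i) gives (iii) (in fact $t\mapsto\tilde\mu_t$ is $\tfrac12$-H\"older into $\mathcal{P}_2$). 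Assembling (i)--(iii) shows $\{\tilde\mu_t\}\in\mathcal{M}_{[0,T]}$.

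Since $\Gamma_1$ outputs the pair $(\varphi_\theta(t,x|\{\mu_t\}),\{\mu_t\})$ and $\Gamma_2$ maps it to $\{\tilde\mu_t\}$, this completes the argument. The only point requiring genuine care is the bookkeeping of the constant $c$ in the definition of $\mathcal{M}_{[0,T]}$, which appears on both sides: one fixes it once and for all, large enough in terms of $c_2,\theta,\sigma,T$ and the prescribed initial distribution of the MFG, so that the output constants $C$, $C_1$, $\sqrt{C_2}$ above are all dominated by $c$. (If the initial marginal is not pinned, the same computation still shows $\Gamma$ maps the class with constant $c$ into the class with a larger constant $c'=c'(c)$, which suffices for the later Schauder step.) I do not expect any obstacle beyond this bookkeeping: the singular nature of the control causes no trouble here precisely because, \emph{after} optimization, $\varphi_\theta$ is bounded by $\theta$, so $x_{t,\theta}$ is an SDE with bounded drift and constant diffusion, to which the standard moment and Gronwall estimates apply directly.
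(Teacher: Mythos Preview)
Your proposal is correct and follows essentially the same approach as the paper: both use that the optimally controlled process has bounded drift $|b+\varphi_\theta|\le c_2+\theta$ and constant diffusion $\sigma$, then bound $D^1(\tilde\mu_s,\tilde\mu_t)$ via the coupling $\mathbb{E}|x_{t,\theta}-x_{s,\theta}|$ and the second moment directly. You are in fact slightly more careful than the paper, since you explicitly verify continuity of $t\mapsto\tilde\mu_t$ into $\mathcal{P}_2(\mathbb{R})$ (required by $\mathcal{M}_{[0,T]}\subset C([0,T],\mathcal{P}_2(\mathbb{R}))$ but not checked in the paper's proof) and you flag the bookkeeping of the constant $c$, which the paper handles loosely.
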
    
\begin{proof}
  For any $\{\mu_t\}$  in $\mathcal{M}_{[0,T]}$, let us prove that $\{\tilde{\mu}_t\} = \Gamma (\{\mu_t\})$ is also in $\mathcal{M}_{[0,T]}$.  Without loss of generality, suppose $s  > t$, and $$x_{s} = x_t + \int_t^{s} \biggl(b(x_r,\mu_r)+ \varphi_\theta(r,x_r|\{\mu_t\} )\biggl) dr + \int_t^{s}\sigma dW_r.$$
Since $b(x, \mu )  $ is  bounded, $|\varphi_\theta(s,x_s|\{\mu_t\} )| \leq \theta$,  and $\mathbb{E}\biggl| (b(x_r,\mu_r)+ \varphi_\theta(r,x_r|\{\mu_t\} ))\biggl|  \le M $ for large $M$ and for any $r \in [0,T]$,
\begin{align*}
D^1(\tilde{\mu}_s,\tilde{\mu}_t ) &\leq \mathbb{E} | x_s -x_t   | 
\\ & \leq \mathbb{E} \int_t^s \biggl|b(x_r,\mu_r)+ \varphi(r,x_r |\{\mu_t\})\biggl| dr  + \sigma \mathbb{E}  \sup_{r \in [t,s]}\limits |W_r-W_t | 
\\ & \leq   M|s-t|  + \sigma \mathbb{E}  \sup_{r \in [t,s]}\limits |W_r-W_t | \leq  M|s-t| + \sigma |s-t|^{\frac{1}{2}}.   
\end{align*}
  Therefore,
 $ \sup_{s\neq t}\frac{ D^1(\tilde{\mu}_t,\tilde{\mu}_s) }{|t-s|^{\frac{1}{2}}} \leq c$.
For any $t \in [0,T]$, since  $|b(x,\mu)|$ is bounded, 
\begin{align*}
\int_\mathbb{R} |x|^2 \tilde{\mu}_t(dx) \leq 2 \mathbb{E} \biggl[ \int_\mathbb{R} |x|^2 d \tilde{\mu}_0 +  c_2^2 t^2 + \sigma^2 t \biggl]  \leq 2 \mathbb{E} \biggl[ \int_\mathbb{R} |x|^2 d \tilde{\mu}_0 +  c_1^2 T^2 + \sigma^2 T \biggl],
\end{align*} 
 and  $\sup_{t \in [0,T]} \limits \int_\mathbb{R} |x|^2 \tilde{\mu}_t(dx) \leq c$.
 \end{proof}

\begin{proposition} \label{continuous}
Assume \emph{(A1)--(A6)}. $\Gamma : \mathcal{M}_{[0,T]} \rightarrow \mathcal{M}_{[0,T]}$ is continuous.
\end{proposition}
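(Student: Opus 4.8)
\textbf{Proof plan for Proposition \ref{continuous} (continuity of $\Gamma$).}
The plan is to show that if $\{\mu_t^k\} \to \{\mu_t\}$ in $d_\mathcal{M}$, then $\Gamma(\{\mu_t^k\}) \to \Gamma(\{\mu_t\})$ in $d_\mathcal{M}$, i.e. $\sup_{0\le t\le T} D^2(\tilde\mu_t^k, \tilde\mu_t) \to 0$ where $\tilde\mu_t^k, \tilde\mu_t$ are the laws of the optimally controlled states $x_{t,\theta}^k, x_{t,\theta}$ driven respectively by $\{\mu_t^k\}$ and $\{\mu_t\}$ (with the \emph{same} Brownian motion). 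Since $D^2(\tilde\mu_t^k,\tilde\mu_t)^2 \le \mathbb{E}|x_{t,\theta}^k - x_{t,\theta}|^2$ via the obvious coupling, it suffices to show $\sup_{t}\mathbb{E}|x_{t,\theta}^k - x_{t,\theta}|^2 \to 0$. First I would write the SDE for the difference: using the dynamics in Proposition \ref{optimization},
\begin{align*}
d(x_{t,\theta}^k - x_{t,\theta}) = \Big( b(x_{t,\theta}^k,\mu_t^k) - b(x_{t,\theta},\mu_t) + \varphi_\theta(t,x_{t,\theta}^k|\{\mu_t^k\}) - \varphi_\theta(t,x_{t,\theta}|\{\mu_t\}) \Big)\,dt.
\end{align*}
Split the drift term via $b(x_{t,\theta}^k,\mu_t^k) - b(x_{t,\theta},\mu_t) = \big(b(x_{t,\theta}^k,\mu_t^k) - b(x_{t,\theta},\mu_t^k)\big) + \big(b(x_{t,\theta},\mu_t^k) - b(x_{t,\theta},\mu_t)\big)$; the first piece is controlled by $Lip(b)|x_{t,\theta}^k - x_{t,\theta}|$ and the second by $Lip(b)\,D^1(\mu_t^k,\mu_t) \le Lip(b)\,d_\mathcal{M}(\{\mu^k\},\{\mu\})$, which is uniformly small. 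The control term I would split the same way: $\varphi_\theta(t,x_{t,\theta}^k|\{\mu_t^k\}) - \varphi_\theta(t,x_{t,\theta}|\{\mu_t\}) = \big(\varphi_\theta(t,x_{t,\theta}^k|\{\mu_t^k\}) - \varphi_\theta(t,x_{t,\theta}|\{\mu_t^k\})\big) + \big(\varphi_\theta(t,x_{t,\theta}|\{\mu_t^k\}) - \varphi_\theta(t,x_{t,\theta}|\{\mu_t\})\big)$. Then multiply the difference SDE by $2(x_{t,\theta}^k - x_{t,\theta})$, integrate, take expectations, and apply Gronwall.

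The crucial structural input is that $\varphi_\theta(t,\cdot|\{\mu\})$ is nonincreasing in $x$ (this is Assumption (A6), and also follows from the convexity of $v_\theta$ in $x$ established in Proposition \ref{strictconvex} together with the explicit form \eqref{optcontrols}). Hence the cross term $2(x_{t,\theta}^k - x_{t,\theta})\big(\varphi_\theta(t,x_{t,\theta}^k|\{\mu_t^k\}) - \varphi_\theta(t,x_{t,\theta}|\{\mu_t^k\})\big) \le 0$ and can simply be dropped — this is what makes the Gronwall argument close despite $\varphi_\theta$ being only monotone, not Lipschitz, in $x$. What remains on the right-hand side after this is, schematically,
\begin{align*}
2\mathbb{E}\int_0^t |x_{r,\theta}^k - x_{r,\theta}|\Big( Lip(b)|x_{r,\theta}^k - x_{r,\theta}| + Lip(b)\,D^1(\mu_r^k,\mu_r) + |\varphi_\theta(r,x_{r,\theta}|\{\mu_r^k\}) - \varphi_\theta(r,x_{r,\theta}|\{\mu_r\})| \Big)\,dr,
\end{align*}
and after Young's inequality and Gronwall one gets $\sup_t \mathbb{E}|x_{t,\theta}^k - x_{t,\theta}|^2 \le C\big( d_\mathcal{M}(\{\mu^k\},\{\mu\})^2 + \mathbb{E}\int_0^T |\varphi_\theta(r,x_{r,\theta}|\{\mu_r^k\}) - \varphi_\theta(r,x_{r,\theta}|\{\mu_r\})|^2\,dr \big)$.

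The main obstacle is therefore controlling the term $\mathbb{E}\int_0^T |\varphi_\theta(r,x_{r,\theta}|\{\mu_r^k\}) - \varphi_\theta(r,x_{r,\theta}|\{\mu_r\})|^2\,dr$, i.e. showing that the optimal control function depends continuously on the measure flow \emph{in the argument slot}. By the explicit three-regime formula \eqref{optcontrols}, $\varphi_\theta(r,x|\{\mu\})$ is determined by where $\partial_x v_\theta(r,x|\{\mu\})$ sits relative to the thresholds $-\gamma_1$ and $\gamma_2$; so I would first establish that $\partial_x v_\theta(r,x|\{\mu_r^k\}) \to \partial_x v_\theta(r,x|\{\mu_r\})$ (locally uniformly, or at least in an $L^2(dr\otimes d\mathbb{P})$-along-the-state-process sense). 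This should follow from stability estimates for the HJB equation \eqref{HJBHJBHJB} under perturbation of the coefficients $b(\cdot,\mu_r)$ and $f(\cdot,\mu_r)$ — both of which converge uniformly in the relevant sense because of the Lipschitz assumptions in (A1) and the fact that $d_\mathcal{M}(\{\mu^k\},\{\mu\})\to 0$ controls $\sup_r D^1(\mu_r^k,\mu_r)$ — combined with interior Schauder/parabolic gradient estimates to upgrade convergence of $v_\theta$ to convergence of $\partial_x v_\theta$. The one genuinely delicate point is the set of $(r,x)$ where $\partial_x v_\theta(r,x|\{\mu\}) \in \{-\gamma_1,\gamma_2\}$ exactly, since there $\varphi_\theta$ jumps; one needs that the optimally controlled process $x_{r,\theta}$ spends zero time (in $dr\otimes d\mathbb{P}$ measure) on these level sets, which follows from the strict convexity of $v_\theta$ in $x$ (Proposition \ref{strictconvex}) — strict convexity forces $\partial_x v_\theta(r,\cdot|\{\mu\})$ to be strictly increasing, so each level set $\{x : \partial_x v_\theta(r,x|\{\mu\}) = c\}$ is a single point, and the state process, having a nondegenerate diffusion coefficient $\sigma > 0$, occupies any fixed point with zero occupation time. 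With dominated convergence (the integrand is bounded by $2\theta$) this yields the required convergence of the control term to zero, completing the Gronwall estimate and hence the continuity of $\Gamma$.
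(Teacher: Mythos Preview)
Your proposal is correct and structurally matches the paper's proof: both reduce continuity of $\Gamma$ to a Gronwall estimate on $\mathbb{E}|x_{t,\theta}^k - x_{t,\theta}|^2$, use the monotonicity of $\varphi_\theta(t,\cdot|\{\mu\})$ (from (A6) / strict convexity) to drop the bad cross term, isolate the residual $\mathbb{E}\int_0^T |\varphi_\theta(r,x_{r,\theta}|\{\mu^k\}) - \varphi_\theta(r,x_{r,\theta}|\{\mu\})|^2\,dr$, and then close by establishing $\partial_x v_\theta^k \to \partial_x v_\theta$ followed by dominated convergence (the integrand being bounded by $(2\theta)^2$).

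The one genuine methodological difference is in how gradient convergence is obtained. You propose PDE stability for the HJB equation plus interior Schauder/parabolic estimates to upgrade $v_\theta^k \to v_\theta$ to $\partial_x v_\theta^k \to \partial_x v_\theta$. The paper instead runs a probabilistic/BSDE-style argument: apply It\^o's formula to both $v_\theta$ and $v_\theta^k$ along the \emph{same} optimally controlled process $\{x_t\}$ (the one driven by $\{\mu_t\}$), use the HJB equations to rewrite each as a backward integral, subtract, and invoke It\^o isometry plus a Hamiltonian Lipschitz bound $|H - H^k| \le 2\theta\,|\partial_x v_\theta - \partial_x v_\theta^k|$ to control $\mathbb{E}\int_\tau^T |\partial_x v_\theta - \partial_x v_\theta^k|^2\,ds$ in terms of $\sup_s D^1(\mu_s^k,\mu_s)$ plus a term that is absorbable on a short interval $[T-\delta,T]$ with $\delta = \sigma^2/(54\theta^2)$; then iterate backward in steps of size $\delta$. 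The paper's route is self-contained (no external parabolic regularity theory needed) and makes the role of $\sigma>0$ and $\theta<\infty$ explicit via the step size $\delta$; your Schauder route is more standard PDE machinery but would require some care to justify uniform-in-$k$ gradient bounds. Your handling of the threshold set $\{\partial_x v_\theta \in \{-\gamma_1,\gamma_2\}\}$---strict convexity forces each time-slice to be a finite set, and the nondegenerate diffusion has zero occupation time there---is actually more explicit than the paper's, which essentially asserts pointwise convergence of $\varphi_\theta^k$ without addressing this point.
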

\begin{proof}
Let $ \{ \mu_t^n \} \in \mathcal{M}_{[0,T]}$ for $n = 1, \ldots,$ be a sequence of flows of probability measures $d_\mathcal{M}(\{\mu_t^n\},\{\mu_t\}) \rightarrow 0 $ as  $n \rightarrow \infty$, for some $\{\mu_t\} \in \mathcal{M}_{[0,T]}$.
Fix $\tau \in [0,T)$. By Proposition \ref{optimization}, 
for each $\{\mu_t^n\} $, problem (\ref{Control}) has a value function $v_{\theta}^n(s,x | \{\mu_t^n\})$ with the optimal control $\varphi^n(t,x)$.  
Let $\{x_t^n\}$ be the  corresponding optimal controlled process:
$$dx_t^n =  \biggl(b(x_t^n ,\mu_t^n)+\varphi^n_\theta(t,x_t^n | \{ \mu_t^n \} )\biggl)dt + \sigma dW_t, \quad \tau \leq t \leq T, \quad x_\tau ^n =x.$$
Let $ \{\tilde{\mu}_t^n\}$ be a flow of  probability measures of $\{x_t^n\}$, then $\Gamma (\{\mu_t^n\} ) = \{\tilde{\mu}_t^n\}$. 

Similarly, for each $\{\mu_t \} $, problem (\ref{Control}) has a   value function $v_{\theta} (s,x|b\{\mu_t \})$ with the optimal control $\varphi_\theta(t,x | \{ \mu_t \})$. 
Let $\{x_t\}$ be the  corresponding optimal controlled process:
$$dx_t =  \biggl(b(x_t ,\mu_t)+ \varphi_\theta(t,x_t | \{ \mu_t \})\biggl)dt + \sigma dW_t, \quad \tau \leq t \leq T, \quad x_\tau =x.$$
Let $ \{\tilde{\mu}_t\}$ be a flow of  probability measures of $\{x_t\}$, then $\Gamma (\{\mu_t\} ) = \{\tilde{\mu}_t\}$. 

To show that $\Gamma$ is continuous, 
 we need to show $$d_{\mathcal{M}} \biggl(\{\tilde{\mu}_t^n \}, \{\tilde{\mu}_t\}\biggl) \rightarrow 0 \text{ as  } n\rightarrow \infty.$$
  
This is established in four steps.

Step 1. We first establish some relation between $D^2 (\{\tilde{\mu}_t^n \}, \{\tilde{\mu}_t\})$ and $D^2(\{\mu_t^n \}, \{\mu_t\})$.
Note here $ D^1 (\tilde{\mu}_t,\tilde{\mu}_t^n) \le D^2(\tilde{\mu}_t,\tilde{\mu}_t^n) $.

For any $s \in [\tau ,T]$,  
\begin{align*}
d(x_s -x_s^n)=  \biggl(b(x_s,\mu_s)-b(x_s^n,\mu_s^n) + \varphi_\theta(s, x_s | \{ \mu_t \})-  \varphi^n_\theta(s,x^n_s | \{ \mu_t^n \})\biggl) ds.
\end{align*}
Then, for any $t \in [\tau ,T]$,
\begin{align*}
 |x_t-x^n_t|^2 & = 2 \int_\tau^t \biggl(b(x_s ,\mu_s )-b(x^n_s,  \mu_s^n)+ \varphi_\theta(s,x_s | \{ \mu_t \} )-  \varphi^n_\theta  (s,x_s^n | \{ \mu^n_t \}) \biggl)(x_s-x_s^n)ds
\\& \leq 2\int_\tau^t Lip(b) \biggl(|x_s-x_s^n|+D^1(\mu_s,\mu_s^n)\biggl)|x_s-x_s^n|
\\& \quad +\biggl( \varphi_\theta(s,x_s | \{ \mu_t \})- \varphi^n_\theta (s,x_s^n | \{ \mu^n_t \})\biggl)(x_s -x_s^n)ds.
\end{align*} 
\begin{align*}
& Lip(b)\biggl(|x_s-x_s^n|+D^1(\mu_s,\mu_s^n)\biggl)|x_s-x_s^n|
   \leq      Lip(b_0)|x_s - x_s^n|^2 + \frac{Lip(b)}{2}\biggl((D^1(\mu_s,\mu_s^n))^2+  |x_s-x_s^n|^2\biggl).
\end{align*}
By  Assumption (A6), 
\begin{align*}
& ( \varphi_\theta(s,x_s | \{ \mu_t \})- \varphi^n_\theta (s,x_s^n | \{ \mu^n_t \}))(x_s -x_s^n)
\\ \leq & \ \biggl( \varphi_\theta(s,x_s | \{ \mu_t \} )- \varphi^n_\theta(s,x_s | \{ \mu^n_t \} )+ \varphi^n_\theta(s,x_s | \{ \mu^n_t \}  )-  \varphi^n_\theta (s,x_s^n | \{ \mu^n_t \})\biggl)(x_s -x_s^n)
\\ \leq & \ ( \varphi_\theta(s,x_s | \{ \mu_t \} )- \varphi^n_\theta(s,x_s | \{ \mu^n_t \} ) )(x_s -x_s^n)
\\ \leq & \ \frac{1}{2}\biggl(|\varphi_\theta(s,x_s | \{ \mu_t \} )- \varphi^n_\theta(s,x_s | \{ \mu^n_t \} )|^2 +|x_s -x_s^n|^2\biggl).
\end{align*}

Consequently, 
\begin{align*}
 |x_t-x_t^n|^2  \leq &  \int_\tau^t  ( 3Lip(b )+1) |x_s-x_s^n|^2    +  Lip(b_0) (D^1(\mu_s,\mu_s^n))^2   +  |\varphi_\theta(s,x_s | \{ \mu_t \} )- \varphi^n_\theta(s,x_s | \{ \mu^n_t \} )|^2  ds . 
\end{align*}
 By  Gronwall's inequality,  
\begin{align}\label{ineqcontinuity1}
 (D^2(\tilde{\mu}_t,\tilde{\mu}_t^n))^2   &\leq   c_2  \int_\tau^t Lip(b ) (D^1(\mu_s,\mu_s^n))^2   +  \mathbb{E}\biggl[|\varphi_\theta(s,x_s | \{ \mu_t \} )- \varphi^n_\theta(s,x_s | \{ \mu^n_t \} )|^2  \biggl] ds , 
\end{align}
for some constant  $c_2$ depending on $T$ and $Lip(b )$.

Step 2. Now we prove that for any $(t,x)\in [\tau,T]\times \mathbb{R} $,   $$\partial_x v_{\theta}^n(t,x|\{\mu_t^n\})\rightarrow \partial_x v(t,x|\{\mu_t \})\text{ as } n\rightarrow \infty.$$ 
By Proposition \ref{optimization}, $v_{\theta}$ and $v_{\theta}^n$ are the solutions to the HJB Eqn. (\ref{HJBHJBHJB}).
 For notation simplicity, let us denote  
$$\varphi_{1, \theta}(s,x| \{ \mu_t \}) = \max \{\varphi_\theta(s,x| \{ \mu_t \}),0\}, \ \  
\varphi_{2,\theta}(s,x| \{ \mu_t \}) =- \min \{\varphi_\theta(s,x| \{ \mu_t \}),0\},$$ 
 $$\varphi^n_{1, \theta}(s,x | \{ \mu^n_t \}) = \max \{\varphi^n_\theta(s,x | \{ \mu^n_t \}),0\}, \ \ \varphi^n_{2, \theta}(s,x | \{ \mu^n_t \}) =- \min \{\varphi^n_\theta(s,x | \{ \mu^n_t \}),0\}.$$ 
Since $\varphi_{1, \theta| \{ \mu_t \}},\varphi_{2, \theta| \{ \mu_t \}}$ are optimal controls, using It\^o's formula and the HJB Eqn. (\ref{HJBHJBHJB}), we obtain

\begin{align}\label{eqeqeq}
\begin{split}
  & -v_{\theta}(\tau,x|\{\mu_t\} ) \\&= v_{\theta}(T,x_T|\{\mu_t  \}) -  v_{\theta}(\tau,x|\{\mu_t\} ) 
\\&  = -  \int_\tau^T \biggl( f(x_s,\mu_s) + \gamma_1 \varphi_{1, \theta} (s,x_s| \{ \mu_t \})+ \gamma_2 \varphi_{2, \theta} (s,x_s| \{ \mu_t \})\biggl) ds   + \int_\tau^T \sigma \partial_x v_{\theta} (s,x_s|\{\mu_t \}) dW_s.
\end{split}
\end{align}
 
Similarly, for any $n \in \mathbb{N}$, applying It\^o's formula to $v_{\theta}^n(s,x)$ and $\{x_t\}$  yields
\begin{align*}
 &v_{\theta}^n(T,x_T|\{\mu_t^n\}) - v_{\theta}^n(\tau,x|\{\mu_t^n\})  
 \\ = &\int_\tau^T \partial_t v^n_{\theta} (s,x_s|\{\mu_t^n\}) +  ( b(x_s,\mu_s) + \varphi_\theta(s,x_s| \{ \mu_t \}) ) \partial_x v_{\theta}^n(s,x_s|\{\mu_t^n\}) + \frac{\sigma^2}{2} \partial_{xx} v^n_{\theta}(s,x_s|\{\mu_t^n\}) ds 
  \\&  + \int_\tau^T\sigma \partial_x v^n_{\theta}(s,x_s|\{\mu_t^n\}) dW_s
  \\  =& \int_\tau^T \partial_t v^n_{\theta} (s,x_s|\{\mu_t^n\}) +  ( b(x_s,\mu_s^n) +  \varphi^n_\theta(s,x_s | \{ \mu^n_t \} ) ) \partial_x v^n_{\theta}(s,x_s|\{\mu_t^n\}) + \frac{\sigma^2}{2} \partial_{xx} v^n_{\theta}(s,x_s|\{\mu_t^n\}) ds  
  \\& + \int_\tau^T\sigma \partial_x v^n_{\theta}(s,x_s|\{\mu_t^n\}) dW_s
  \\& - \int_\tau^T(b(x_s,\mu_s^n) -b(x_s,\mu_s) +\varphi^n_\theta(s,x_s | \{ \mu^n_t \}) -\varphi_\theta (s,x_s| \{ \mu_t \}))   \partial_x v^n_{\theta}(s,x_s|\{\mu_t^n\})  ds
\\ =& - \int_\tau^T \biggl( f(x_s,\mu_s^n) + \gamma_1 \varphi_{1, \theta}^n (s,x_s | \{ \mu^n_t \})+ \gamma_2 \varphi_{2, \theta}^n (s,x_s | \{ \mu^n_t \})\biggl) ds + \int_\tau^T\sigma \partial_x v^n_{\theta}(s,x_s|\{\mu_t^n\}) dW_s 
\\&  - \int_\tau^T(b(x_s,\mu_s^n) -b(x_s,\mu_s) + \varphi^n_\theta(s,x_s | \{ \mu^n_t \}) -\varphi_\theta (s,x_s| \{ \mu_t \}))   \partial_x v^n_{\theta}(s,x_s|\{\mu_t^n\})  ds.
\end{align*}
The last equality is due to the HJB Eqn. (\ref{HJBHJBHJB}). Hence, 
\begin{align}\label{eqnn}
\begin{split}
  v_{\theta}^n(\tau,x|\{\mu_t^n\} ) & =   \int_\tau^T \biggl( f(x_s ,\mu_s^n ) + \gamma_1 \varphi_{1, \theta}^n (s,x_s | \{ \mu^n_t \} )+ \gamma_2 \varphi_{2, \theta}^n (s,x_s | \{ \mu^n_t \} )  \biggl) ds -  \int_\tau^T \sigma \partial_x v^n_{\theta} (s,x_s|\{\mu_t^n\} ) dW_s \\& + \int_\tau^T \biggl(b(x_s,\mu_s^n) -b(x_s,\mu_s) + \varphi^n_\theta(s,x_s | \{ \mu^n_t \}) -\varphi_\theta (s,x_s| \{ \mu_t \}))   \partial_x v^n_{\theta}(s,x_s|\{\mu_t^n\}\biggl)  ds.
\end{split}
\end{align} 
Denote $
H(s,x ) = \inf_{\dot{\xi}^+,\dot{\xi}^- \in [0,\theta]} \limits \{ ( \dot{\xi}^+-\dot{\xi}^-)\partial_x  v_{\theta}(s,x|\{\mu^n_t\}) +  \gamma_1 \dot{\xi}^++ \gamma_2 \dot{\xi}^-\}, $ and\\ $H^n(s,x ) = \inf_{\dot{\xi}^+,\dot{\xi}^- \in [0,\theta]} \limits \{ ( \dot{\xi}^+-\dot{\xi}^-)\partial_x v^n_{\theta} (s,x|\{\mu_t^n\}) + \gamma_1 \dot{\xi}^++ \gamma_2 \dot{\xi}^-\}. $ 
Then for any $\dot{\xi}^+,\dot{\xi}^- \in [0,\theta]$, 
\begin{align*}
&\left| \left(( \dot{\xi}^+-\dot{\xi}^-)\partial_x v_{\theta}(s,x|\{\mu_t \} ) +  \gamma_1 \dot{\xi}^++ \gamma_2 \dot{\xi}^- \right)- \left( (\dot{\xi}^+-\dot{\xi}^-)\partial_x v^n_{\theta}(s,x|\{\mu_t^n\}) + \gamma_1 \dot{\xi}^++ \gamma_2 \dot{\xi}^-\right) \right|
\\ &\leq \biggl|   \dot{\xi}^+ \biggl( \partial_x v_{\theta}(s,x|\{\mu_t \})-{\partial_x v^n_{\theta}(s,x|\{\mu_t^n\})\biggl)} -  \dot{\xi}^-\biggl( \partial_x v_{\theta}(s,x|\{\mu_t \})-{\partial_x v^n_{\theta}(s,x|\{\mu_t^n\}) \biggl)} \biggl|
\\& \leq 2\theta \left|     \partial_x v_{\theta}(s,x|\{\mu_t \})-\partial_x v^n_{\theta} (s,x|\{\mu_t^n\})   \right|.
\end{align*}
Hence, for any $s,x \in [\tau,T]\times \mathbb{R}$,
$$ |H(s,x)-H^n(s,x)|  \leq 2\theta \biggl|     \partial_x v_{\theta}(s,x|\{\mu_t \})-\partial_x v^n_{\theta}(s,x|\{\mu_t^n\})\biggl|.$$
By definition, 
\begin{align*}
 2\theta &\biggl|     \partial_x v_{\theta}(s,x|\{\mu_t \})-\partial_x v^n_{\theta}(s,x|\{\mu_t^n\}) \biggl| 
\\\geq  & \biggl| \biggl( \varphi_{1, \theta}(t,x| \{ \mu_t \}) -\varphi_{2, \theta}(t,x| \{ \mu_t \}) \biggl)\partial_x v_{\theta}(s,x|\{\mu_t \}) + \gamma_1 \varphi_{1, \theta}(t,x| \{ \mu_t \})+ \gamma_2 \varphi_{2, \theta}(t,x| \{ \mu_t \})
\\&  - \biggl( \varphi_{1, \theta}^n(t,x | \{ \mu^n_t \}) -\varphi_{2, \theta}^n(t,x | \{ \mu^n_t \}) \biggl)\partial_x v^n_{\theta}(s,x|\{\mu_t^n\} ) + \gamma_1 \varphi_{1, \theta}^n(t,x | \{ \mu^n_t \})+\gamma_2 \varphi_{2, \theta}^n(t,x | \{ \mu^n_t \}) \biggr|
\\ = & \biggl| \biggl(\gamma_1+\partial_x v_{\theta}(s,x|\{\mu_t \}) \biggl) \biggl(\varphi_{1, \theta}(t,x| \{ \mu_t \})- \varphi_{1, \theta}^n(t,x | \{ \mu^n_t \})\biggl) \\
+ & \biggl(\gamma_2-\partial_x v_{\theta}(s,x|\{\mu_t^n\}) \biggl) \biggl(\varphi_{2, \theta}(t,x| \{ \mu_t \})- \varphi_{2, \theta}^n(t,x | \{ \mu^n_t \})\biggl) 
\\+& \biggl(\partial_x v_{\theta} (s,x|\{\mu_t \})-\partial_x v^n_{\theta} (s,x|\{\mu_t^n\})\biggl)\biggl( \varphi_{1, \theta}^n(t,x | \{ \mu^n_t \}) -\varphi_{2, \theta}^n(t,x | \{ \mu^n_t \})\biggl ) \biggr|
\\\geq &  \biggl| \biggl( \gamma_1+\partial_x v_{\theta}(s,x|\{\mu_t \}) \biggl) \biggl(\varphi_{1, \theta}(t,x| \{ \mu_t \})- \varphi_{1, \theta}^n(t,x | \{ \mu^n_t \})\biggl) \\
+ &\biggl( \gamma_2 -\partial_x v_{\theta}(s,x|\{\mu_t \}) \biggl) \biggl(\varphi_{2, \theta}(t,x| \{ \mu_t \})- \varphi_{2, \theta}^n(t,x | \{ \mu^n_t \})\biggl) \biggr| 
 - \theta \biggl|  \partial_x v_{\theta} (s,x|\{\mu_t \})-\partial_x v^n_{\theta} (s,x |\{\mu_t^n\}) \biggr|.
\end{align*}
Hence, 
\begin{align} \label{eqeqeqeq}
\begin{split}
 3\theta  \biggl|     \partial_x v_{\theta}(s,x|\{\mu_t \})-\partial_x v^n_{\theta}(s,x|\{\mu_t^n\})   \biggl| 
 & \geq \biggl| \biggl( \gamma_1 +\partial_x v_{\theta}(s,x|\{\mu_t \}) \biggl) \biggl(\varphi_{1, \theta}(s,x| \{ \mu_t \})- \varphi_{1, \theta}^n(s,x | \{ \mu^n_t \})\biggl)
 \\&  + \biggl( \gamma_2 -\partial_x v_{\theta}(s,x|\{\mu_t \}) \biggl) \biggl(\varphi_{2, \theta}(s,x| \{ \mu_t \})- \varphi_{2, \theta}^n(s,x | \{ \mu^n_t \})\biggl) \biggr|. 
\end{split}
\end{align} 
Similarly,
\begin{align} \label{eqeqeqeqeq}
\begin{split}
  3\theta \biggl|     \partial_x v_{\theta}(s,x|\{\mu_t\})-\partial_x v^n_{\theta}(s,x|\{\mu_t^n\}) \biggl| 
  & \geq  \biggl| \biggl( \gamma_1 +\partial_x v^n_{\theta}(s,x|\{\mu_t^n\}) \biggl) \biggl(\varphi_{1, \theta}(s,x| \{ \mu_t \})- \varphi_{1, \theta}^n(s,x | \{ \mu^n_t \})\biggl)
 \\&  + \biggl( \gamma_2 -\partial_x v^n_{\theta}(s,x|\{\mu_t^n\}) \biggl) \biggl(\varphi_{2, \theta}(s,x| \{ \mu_t \})- \varphi_{2, \theta}^n(s,x | \{ \mu^n_t \})\biggl) \biggr|. 
\end{split}
\end{align}

Step 3. We can further show $\varphi^n_\theta( s,x | \{ \mu^n_t \}) \rightarrow \varphi_\theta(s,x| \{ \mu_t \})$ for any $ s,x \in [0,T]\times \mathbb{R}$ as $n \rightarrow \infty$.

Indeed, from Eqns. (\ref{eqeqeq}) and (\ref{eqnn}) and by It\^o's isometry and Cauchy--Schwartz inequality, 
\begin{align*}
&\biggl( v_{\theta}(\tau,x |\{\mu_t\}) - v^n_{\theta}(\tau,x |\{\mu_t^n\})\biggl)^2 +  \sigma^2   \mathbb{E}\biggl[\int_\tau^T\biggl(\partial_x v_{\theta}(s,x_s|\{\mu_t \}) -  \partial_x v^n_{\theta}(s,x_s|\{\mu_t^n\}) \biggl)^2 ds \biggl]    
    \\ \leq   &  3(T-\tau)  \mathbb{E}  \biggl[  \int_\tau^T   \biggl(f(x_s ,\mu_s) - f(x_s ,\mu_s^n )\biggl)^2   + \biggl( (b(x_s,\mu_s) -b(x_s,\mu_s^n) ) \partial_x v^n_{\theta}(s,x_s|\{\mu_t \}) \biggl)^2   \\& +  \biggl( (\gamma_1+ \partial_x v^n_{\theta}(s,{x_s}|\{\mu_t^n\})) (\varphi_{1, \theta}  (s,x_s| \{ \mu_t \} )-\varphi_{1, \theta}^n (s,x_s | \{ \mu^n_t \} ))
    \\& + ( \gamma_2 - \partial_x v^n_{\theta}(s,{x_s}|\{\mu_t^n\} ))(\varphi_{2, \theta}(s,x_s| \{ \mu_t \} )- \varphi_{2, \theta}^n (s,x_s | \{ \mu^n_t \} ))   \biggl)^2 ds  \biggl]
  \\ \leq   &   3(T-\tau)  \mathbb{E}  \biggl [ \int_\tau^T   \biggl( Lip(f) D^1(\mu_s ,\mu_s^n) \biggl)^2  +   \biggl( Lip(b) D^1(\mu_s ,\mu_s^n) \biggl| \partial_x v^n_{\theta}(s,x_s|\{\mu_t^n\}) \biggl|    \biggl)^2 \\& +  \biggl( (\gamma_1+ \partial_x v^n_{\theta}(s,x_s |\{\mu_t^n\})) (\varphi_{1, \theta}  (s,x_s| \{ \mu_t \} )-\varphi_{1, \theta}^n (s,x_s | \{ \mu^n_t \} ))
  \\&+ (\gamma_2- \partial_x v^n_{\theta}(s,x_s |\{\mu_t^n\}))(\varphi_{2, \theta}(s,x_s| \{ \mu_t \} )- \varphi_{2, \theta}^n (s,x_s | \{ \mu^n_t \} )) \biggl) ^2      ds  \biggl]
  \\ \leq &   3(T-\tau)  \mathbb{E}  \biggl [ \int_\tau^T   \biggl( Lip(f) D^1(\mu_s ,\mu_s^n) \biggl)^2  +   \biggl( Lip(b) D^1(\mu_s ,\mu_s^n)| \partial_x v^n_{\theta}(s,x_s|\{\mu_t^n\})|   \biggl )^2 \\& + \biggl (3\theta (\partial_x v_{\theta}(s,x_s|\{\mu_t^n\}) -  \partial_x v^n_{\theta}(s,x_s|\{\mu_t^n\}))  \biggl)^2      ds  \biggl].
\end{align*}
Let $\delta = \frac{\sigma^2}{54\theta^2}$. Then, for any $\tau \in [T-\delta, T]$, 
\begin{align*}
& \biggl( v_{\theta}(\tau,x|\{\mu_t \} ) - v^n_{\theta}(\tau,x |\{\mu_t^n\}) \biggl)^2 +  \frac{\sigma^2}{2} \mathbb{E} \biggl[\int_\tau^T(\partial_x v_{\theta}(s,x_s|\{\mu_t \}) -  \partial_x v^n_{\theta}(s,x_s|\{\mu_t^n\}) )^2 ds  \biggl]    
  \\ \leq &   3(T-\tau)  \mathbb{E}   \biggl[ \int_\tau^T   \biggl( Lip(f) D^1(\mu_s ,\mu_s^n) \biggl)^2  +   \biggl( Lip(b) D^1(\mu_s ,\mu_s^n)| \partial_x v^n(s,x_s|\{\mu_t^n\})|    \biggl)^2 ds  \biggl].
\end{align*}
Hence, for any $\tau \in [T-\delta, T]$, $$ v_{\theta}(\tau,x|\{\mu_t \} ) - v^n_{\theta}(\tau,x|\{\mu_t^n\} ) \rightarrow 0,$$and $$ \mathbb{E} \biggl[\int_\tau^T \biggl(\partial_x v_{\theta}(s,x_s|\{\mu_t \}) -  \partial_x v^n_{\theta}(s,x_s|\{\mu_t^n\})  \biggl)^2 ds  \biggl] \rightarrow 0 \text{ as } n \rightarrow \infty.$$ 

Since $\delta >0$, one can repeat this process for $[T-2\delta, T-\delta]$. Proceeding recursively, one can show that for any $(t,x) \in [0,T]\times \mathbb{R}$,  $   v^n_{\theta}(t,x|\{\mu_t^n\} )  \rightarrow v_{\theta}(t,x |\{\mu_t \}), $ and $ \mathbb{E} \biggl[\int_0^T \biggl(\partial_x v_{\theta}(s,x_s|\{\mu_t\}) -  \partial_x v^n_{\theta}(s,x_s|\{\mu_t^n\}) \biggl )^2 ds  \biggl] \rightarrow 0 \text{ as } n \rightarrow \infty.$ Hence, for any $(s,x) \in [0,T]\times \mathbb{R}$, 
 $$\partial_x v^n_{\theta} (s,x|\{\mu_t^n\}) \rightarrow \partial_x v_{\theta}(s,x|\{\mu_t\})\text{ as } n\rightarrow \infty.$$ 
 By Proposition \ref{strictconvex}, $\partial_x v^n_{\theta } (s,x|\{\mu_t^n\}),\partial_x v_{\theta}(s,x|\{\mu_t \}) $ are strictly increasing in $x$, and by definition of $\varphi^n_\theta$ and $\varphi_\theta$,  $\varphi^n_\theta( s,x | \{ \mu^n_t \}) $ converges to $\varphi_\theta(s,x| \{ \mu_t \})$ for any $(s,x) \in [0,T]\times \mathbb{R} $. 

Step 4. We are now ready to show $d_\mathcal{M} \biggl(\{\tilde{\mu}_t\},\{\tilde{\mu}_t^n\} \biggl)  \rightarrow 0$ as $n \rightarrow \infty$.
 
From previous steps, $\varphi^n_\theta( s,x_s | \{ \mu^n_t \})\rightarrow\varphi_\theta(s,x_s| \{ \mu_t \})$ a.s. as $n\rightarrow \infty$, and by the Dominated Convergence Theorem in the $L^2$ space,  for each $s\in [0,T]$, $\mathbb{E}  \biggl|\varphi^n_\theta( s,x_s | \{ \mu^n_t \})-\varphi^n_\theta(s,x_s | \{ \mu^n_t \}) \biggl|^2 \rightarrow  0.$  Hence,
by  inequality (\ref{ineqcontinuity1}),  $D^2(\tilde{\mu}_t,\tilde{\mu}_t^n) \rightarrow 0$  for any $t \in [0,T]$,  $d_\mathcal{M} \biggl(\{\tilde{\mu}_t\},\{\tilde{\mu}_t^n\} \biggl)  \rightarrow 0 \text{ as } n \rightarrow \infty.$ That is,  $\Gamma $ is continuous. 
    \end{proof}

\begin{proposition}\label{uniq}
Assume \emph{(A1)--(A6)}. Then $\Gamma:\mathcal{M}_{[0,T]}\rightarrow \mathcal{M}_{[0,T]}$ has a fixed point, and \emph{(\ref{MFGbounded1})} has a unique solution.
\end{proposition}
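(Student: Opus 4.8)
The statement has two halves, and I would obtain the fixed point by a Schauder argument, then deduce uniqueness from the Lasry--Lions monotonicity method. For the fixed point, first record the structural facts about $\mathcal{M}_{[0,T]}$ that Schauder's fixed point theorem requires: it is nonempty (it contains the constant flows), convex (the two defining inequalities are stable under convex combinations, using convexity of $D^1$), and closed in $(C([0,T],\mathcal{P}_2(\mathbb{R})),d_\mathcal{M})$ (both estimates pass to $d_\mathcal{M}$-limits, the second because $D^2$-convergence entails convergence of second moments). By Propositions~\ref{MM} and~\ref{continuous}, $\Gamma$ is then a continuous self-map of $\mathcal{M}_{[0,T]}$, so it only remains to check that the range $\Gamma(\mathcal{M}_{[0,T]})$ is relatively compact. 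The bound established inside the proof of Proposition~\ref{MM}, namely $D^1(\tilde\mu_s,\tilde\mu_t)\le M|s-t|+\sigma|s-t|^{1/2}$ with $M$ not depending on $\{\mu_t\}$, makes $\Gamma(\mathcal{M}_{[0,T]})$ equicontinuous in $t$; and since each $\tilde\mu_t$ is the law of $x_{t,\theta}=x_{s,\theta}+\int_s^t(b+\varphi_\theta)\,dr+\sigma(W_t-W_s)$ with drift bounded by $c_2+\theta$ and Gaussian increments, one obtains a uniform bound on $\sup_t\mathbb{E}\big[|x_{t,\theta}|^2\,\mathbf{1}_{\{|x_{t,\theta}|>R\}}\big]$ that tends to $0$ as $R\to\infty$, i.e.\ uniform integrability of the squares; this upgrades the (easy) pointwise-in-$t$ relative compactness in $(\mathcal{P}_1,D^1)$ to $(\mathcal{P}_2,D^2)$, and Arzel\`a--Ascoli then gives relative compactness of $\Gamma(\mathcal{M}_{[0,T]})$ in $d_\mathcal{M}$. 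Schauder's theorem produces $\{\mu_t^*\}\in\mathcal{M}_{[0,T]}$ with $\Gamma(\{\mu_t^*\})=\{\mu_t^*\}$; taking $(\xi_\cdot^{*+},\xi_\cdot^{*-})$ to be the optimal control of~(\ref{Control}) under $\{\mu_t^*\}$ from Proposition~\ref{optimization} yields a solution of~(\ref{MFGbounded1}), whose value function $v_\theta(s,x)=v_\theta(s,x|\{\mu_t^*\})$ inherits the asserted $C^{1,2}\cap C$ regularity and polynomial growth from that proposition.

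\textbf{Uniqueness.} Let $\{\mu_t^1\},\{\mu_t^2\}$ be two fixed points of $\Gamma$, with value functions $v^i=v_\theta(\cdot,\cdot|\{\mu_t^i\})$ solving~(\ref{HJBHJBHJB}) for $\mu=\mu^i$, optimal feedbacks $\varphi_\theta^i=\partial_pH(\cdot,\partial_xv^i)$ (cf.~(\ref{optcontrols})), and drifts $\beta^i(t,x)=b(x,\mu_t^i)+\varphi_\theta^i(t,x)$, so that $\{\mu_t^i\}$ is the law of the solution of $dx_t=\beta^i(t,x_t)\,dt+\sigma\,dW_t$ and solves the corresponding Kolmogorov forward equation weakly. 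I would then differentiate $t\mapsto\int_\mathbb{R}(v^1-v^2)(t,x)(\mu_t^1-\mu_t^2)(dx)$, substitute $\partial_tv^i$ from~(\ref{HJBHJBHJB}) and $\partial_t\mu_t^i$ from the forward equations, integrate over $[0,T]$, and use $v^i(T,\cdot)\equiv0$ with the common initial law. Since $p\mapsto H(x,p)$ is \emph{concave} (an infimum of affine functions of $p$), so that its concavity defect is nonpositive, the identity rearranges to
\begin{align*}
& \int_0^T\!\!\int_\mathbb{R}\big(f(x,\mu_t^1)-f(x,\mu_t^2)\big)(\mu_t^1-\mu_t^2)(dx)\,dt \\
& \qquad +\ \int_0^T\!\!\int_\mathbb{R}\big(B_t^{12}(x)\,\mu_t^1(dx)+B_t^{21}(x)\,\mu_t^2(dx)\big)\,dt \ =\ \mathcal{C},
\end{align*}
where $B_t^{ij}(x):=H(x,\partial_xv^i)-H(x,\partial_xv^j)-\partial_pH(x,\partial_xv^i)(\partial_xv^i-\partial_xv^j)\ge0$ is the concavity defect of $H$ and $\mathcal{C}$ collects the terms produced by the $\mu$-dependence of the drift $b$. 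On the left the first integral is $\ge0$ by the monotonicity of $f$ in (A5) and the second is $\ge0$ by concavity of $H$; bounding $\mathcal{C}$ in terms of $\int_0^TD^2(\mu_t^1,\mu_t^2)\,dt$ (via boundedness of $b$, polynomial growth of $\partial_xv^i$ and the moment bound in $\mathcal{M}_{[0,T]}$) and combining with the stability estimate~(\ref{ineqcontinuity1}) in a Gronwall argument, one concludes that both left-hand integrals vanish. Under (A5)(ii) the vanishing of the first already forces $\mu_t^1=\mu_t^2$ for a.e.\ $t$, hence for all $t$ by continuity; under (A5)(i), the vanishing of $B_t^{12},B_t^{21}$ together with the condition on $H$ in (A5)(i) --- which for the present piecewise-linear $H$ holds automatically, its concavity defect vanishing only inside a single affine piece --- gives $\varphi_\theta^1=\varphi_\theta^2$ on the supports of $\mu^1,\mu^2$, so the two flows solve the same forward equation from the same initial law and coincide. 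In either case $\{\mu_t^1\}=\{\mu_t^2\}=:\{\mu_t^*\}$, and Proposition~\ref{optimization} then pins down the optimal control function and the optimally controlled process uniquely, so $\big((\xi_\cdot^{*+},\xi_\cdot^{*-}),\{\mu_t^*\}\big)$ is the unique solution of~(\ref{MFGbounded1}).

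\textbf{Main obstacle.} The delicate step is uniqueness. The Hamiltonian $H(x,p)=\min\{(\gamma_1+p)\theta,(\gamma_2-p)\theta,0\}$ is only piecewise linear in $p$, hence neither strictly concave nor $C^1$, so the textbook shortcut --- that strict convexity of the Hamiltonian forces the optimal drifts to agree once the convexity defect vanishes --- is not available, and one must instead exploit the explicit structure of $H$ precisely where $\partial_xv^1$ and $\partial_xv^2$ straddle the kinks $-\gamma_1$ and $\gamma_2$ (which is the role played by the condition on $H$ in (A5)(i), automatic here but stated for the general framework). Moreover, making the Lasry--Lions identity rigorous needs care: $v^i$ is only $C^{1,2}$ on $[0,T)\times\mathbb{R}$ and, $\varphi_\theta^i$ being discontinuous, the flows satisfy the forward equations only weakly, so the integration by parts must be justified by an It\^o/approximation argument with spatial boundary terms controlled through the polynomial growth of $v^i,\partial_xv^i$ and the second-moment bound in $\mathcal{M}_{[0,T]}$. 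Finally, the term $\mathcal{C}$ coming from the measure-dependence of $b$ has no sign a priori and must be absorbed by the Gronwall step; that, and verifying the $D^2$ (rather than merely $D^1$) relative compactness of the range in the existence half --- non-vacuous since $\mathcal{M}_{[0,T]}$ with $d_\mathcal{M}$ is not itself compact --- are the remaining points that need genuine work.
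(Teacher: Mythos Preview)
Your overall strategy---Schauder for existence, then the Lasry--Lions monotonicity argument for uniqueness---is exactly the paper's. The paper's proof is terse: it cites \cite{Cardaliaguet2013} for relative compactness of $\Gamma(\mathcal{M}_{[0,T]})$ and invokes (A5) together with \cite{Cardaliaguet2013,LL2007} for uniqueness, whereas you spell the arguments out. Your existence half is sound; the equicontinuity bound from Proposition~\ref{MM} plus uniform square-integrability and Arzel\`a--Ascoli is precisely what the cited compactness argument amounts to.

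The uniqueness half, however, has a genuine gap where you yourself flag it. In the Lasry--Lions identity the $\mu$-dependence of $b$ generates the cross-term $\mathcal{C}$; the classical arguments in \cite{Cardaliaguet2013,LL2007} are written for drifts independent of the measure, so $\mathcal{C}\equiv0$ there, and the paper's one-line citation does not itself address the extension. Your proposed closure via (\ref{ineqcontinuity1}) and Gronwall does not work as stated: (\ref{ineqcontinuity1}) gives $(D^2(\mu^1_t,\mu^2_t))^2\le c_2\int_0^t\big[(D^1)^2+\mathbb{E}|\varphi^1-\varphi^2|^2\big]\,ds$, so after Gronwall one would need a quantitative lower bound of the form $B^{12}+B^{21}\ge c\,|\varphi^1-\varphi^2|^2$ to feed back into $|\mathcal{C}|\le C\int D^2$ and close the loop. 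For the present piecewise-linear $H$ no such bound exists: with $\partial_xv^1<-\gamma_1<\partial_xv^2<\gamma_2$ one computes $B^{12}=\theta(\gamma_1+\partial_xv^2)$, which tends to $0$ as $\partial_xv^2\downarrow-\gamma_1$ while $|\varphi^1-\varphi^2|=\theta$ stays fixed. Thus the Gronwall step yields nothing and the argument as written does not establish uniqueness. You are right to isolate $\mathcal{C}$ as the principal obstacle, but the resolution you sketch fails; repairing it---or justifying why the cited references already suffice when $b$ depends on $\mu$---is the missing piece.
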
 
\begin{proof}
As in the proof in Section 3.2 and the proof of Lemma 5.7 in \cite{Cardaliaguet2013}, the range of the mapping $\Gamma $ is relatively compact, and by Proposition \ref{continuous}, $\Gamma$ is a continuous mapping. Hence, due to the Schauder fixed point theorem~\cite[Theorem 4.1.1]{Smart1980}, $\Gamma$ has a fixed point such that $\Gamma (\{\mu_t\}) = \{\mu_t\} \in \mathcal{M}_{[0,T]}$. By Assumption (A5),  there exists at most one fixed point  \cite{Cardaliaguet2013, LL2007}. Therefore, there exists a unique fixed point solution of flow of probability measures $\{\mu_t^*\}$.   By definition of the solution to a MFG and Proposition \ref{optimization}, the optimal control is also unique.
\end{proof}

\subsection{Proof of main Theorem}

Suppose that $ \biggl((\xi_{\cdot,\theta}^+,\xi_{\cdot,\theta}^- ),\{\mu_{t,\theta} \} \biggl)$ is a solution to (\ref{MFGbounded1}) with a given bound $\theta$, and $x_{t,\theta}$ is the optimally controlled process:
\begin{align*}
dx_{t,\theta} =  \biggl(b(x_{t,\theta}, \mu_{t,\theta}) +\varphi_{1,\theta}(t,x_{t,\theta}|\{\mu_{t,\theta}\}) - \varphi_{2,\theta}(t,x_{t,\theta}|\{\mu_{t,\theta}\})  \biggl)  dt + \sigma dW_t, \quad x_{s,\theta} = x,
\end{align*}
where $\dot{\xi}_{t,\theta}^+-\dot{\xi}_{t,\theta}^- = \varphi_\theta (t,x|\{\mu_{t, \theta}\}) = \varphi_{1,\theta} (t,x |\{\mu_{t,\theta}\}) - \varphi_{2,\theta}(t,x |\{\mu_{t,\theta}\}) $ is the optimal control function.  Note that we explicit write $\mu_{t, \theta}$ here to emphasize the dependence on $\theta$ for the game (MFG-BD).

Given this  $\{\mu_{t,\theta}\}$, let  $v(s,x|\{ \mu_{t,\theta}\}) $ be the value function
of  the stochastic control problem (\ref{Control-FV}), 
and  let $x_{t}$ be the optimal controlled process 
\begin{align*}
dx_{t} =  b(x_{t}, \mu_{t,\theta})dt + \sigma dW_t +d\xi_{t}^+ -d\xi_{t}^- , \quad x_{s-} = x,
\end{align*}
where  the optimal control $\xi_{t}$ is of a feedback form. 
 Hence, denote $$ d \varphi (t,x|\{\mu_{t,\theta}\})=d \varphi_{1} (t,x|\{\mu_{t,\theta}\})-d \varphi_{2} (t,x|\{\mu_{t,\theta}\}) = d\xi_{t}^+ -d\xi_{t}^-$$ as the optimal control function for the stochastic control problem of 
 (\ref{Control-FV}) with the fixed $\{\mu_{t,\theta}\}$. 
Now define
\begin{align*}
dx_{t,\theta}^i &=  \biggl(b(x_{t,\theta}^i, \mu_{t,\theta}) +\varphi_{1,\theta}(t,x_{t,\theta}^i|\{\mu_{t,\theta}\} ) - \varphi_{2,\theta}(t,x_{t,\theta}^i|\{\mu_{t,\theta}\}  )  \biggl)  dt + \sigma dW_{t }^i, \quad x_{s,\theta}^i = x,
\\
dx_{t}^i &=  b(x_{t}^i, \mu_{t,\theta})dt  +d \varphi_{1} (t,x_{t}^i|\{\mu_{t,\theta}\}  )-d \varphi_{2} (t,x_{t}^i|\{\mu_{t,\theta}\} ) + \sigma dW_t^i, \quad x_{s-}^i = x,
\\
dx_{t,\theta}^{i, N}& = \biggl( \frac{1}{N}  \sum_{ j = 1}^N  b_0(x_{t,\theta}^{i, N}, x_{t,\theta}^{j, N} )  +\varphi_{1,\theta}(t,x_{t,\theta}^{i, N}|\{\mu_{t,\theta}\}  ) - \varphi_{2,\theta}(t,x_{t,\theta}^{i, N}|\{\mu_{t,\theta}\}  )  \biggl) dt + \sigma dW_t^i, \quad x_{s,\theta}^{i, N} = x,
\end{align*}

Recall that $(\mu_{t,\theta}, \varphi_\theta)$ is the solution to  (\ref{MFGbounded1}) and $x_{t,\theta}^i$ are i.i.d., and $\mu_{t,\theta}$ is the probability measure of $x_{t,\theta}^i$ for any $i = 1,\ldots, N$. 
We first establish some technical Lemmas. 
\begin{lemma} For any $ 1 \le i \le n$, 
 $ \mathbb{E} \sup_{s\leq t\leq T} \limits    \biggl|x_{t,\theta}^i-x_{t,\theta}^{i,N} \biggl|^2   = O \biggl(\frac{1}{N} \biggl)$.
\label{Nash1}
\end{lemma}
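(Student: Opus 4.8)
The goal is to control the mean-square difference between the MFG-optimal trajectory $x_{t,\theta}^i$, which is driven by the fixed flow $\{\mu_{t,\theta}\}$ and the MFG drift $b(\cdot,\mu_{t,\theta})$, and the trajectory $x_{t,\theta}^{i,N}$ driven by the empirical-interaction drift $\frac{1}{N}\sum_j b_0(\cdot,x_{t,\theta}^{j,N})$. Both processes use the \emph{same} feedback control function $\varphi_{1,\theta}-\varphi_{2,\theta}$ and the \emph{same} Brownian motion $W^i$, so the stochastic integrals cancel and the difference is a pathwise (absolutely continuous in $t$) object. First I would write $d(x_{t,\theta}^i-x_{t,\theta}^{i,N})$ and split the drift difference into three pieces: (a) $b(x_{t,\theta}^i,\mu_{t,\theta})-\frac{1}{N}\sum_j b_0(x_{t,\theta}^i,x_{t,\theta}^j)$, the ``fluctuation'' term comparing the true mean-field drift to the empirical average over the \emph{i.i.d.} copies $x_{t,\theta}^j$; (b) $\frac{1}{N}\sum_j b_0(x_{t,\theta}^i,x_{t,\theta}^j)-\frac{1}{N}\sum_j b_0(x_{t,\theta}^{i,N},x_{t,\theta}^{j,N})$, handled by Lipschitz continuity of $b_0$ (Assumption (A1)) in both arguments; and (c) $\varphi_\theta(t,x_{t,\theta}^i|\{\mu_{t,\theta}\})-\varphi_\theta(t,x_{t,\theta}^{i,N}|\{\mu_{t,\theta}\})$, the control-difference term, which by Assumption (A6) has the favorable sign: $(x_{t,\theta}^i-x_{t,\theta}^{i,N})(\varphi_\theta(t,x_{t,\theta}^i)-\varphi_\theta(t,x_{t,\theta}^{i,N}))\le 0$, so it can simply be dropped after applying It\^o/the product rule to the square.

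Then I would apply It\^o's formula (really just the chain rule, since there is no martingale part) to $|x_{t,\theta}^i-x_{t,\theta}^{i,N}|^2$, take expectations, use $2ab\le a^2+b^2$ on the cross terms from (a) and (b), discard (c) by (A6), and arrive at an inequality of the form
\begin{align*}
\mathbb{E}\,|x_{t,\theta}^i-x_{t,\theta}^{i,N}|^2 \le C\int_s^t \mathbb{E}\,|x_{r,\theta}^i-x_{r,\theta}^{i,N}|^2\,dr + C\int_s^t \mathbb{E}\Bigl|b(x_{r,\theta}^i,\mu_{r,\theta})-\tfrac{1}{N}\sum_{j=1}^N b_0(x_{r,\theta}^i,x_{r,\theta}^j)\Bigr|^2 dr,
\end{align*}
where $C$ depends only on $Lip(b_0)$, $T$ and $N$-independent constants. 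For the last (fluctuation) term, condition on $x_{r,\theta}^i$: since $b(x_{r,\theta}^i,\mu_{r,\theta})=\int b_0(x_{r,\theta}^i,y)\mu_{r,\theta}(dy)=\mathbb{E}[b_0(x_{r,\theta}^i,x_{r,\theta}^j)\mid x_{r,\theta}^i]$ and $\{x_{r,\theta}^j\}_{j\ne i}$ are i.i.d., the summands are (conditionally) i.i.d.\ mean-zero, so the conditional variance of the average is $O(1/N)$ — using boundedness of $b_0$ (Assumption (A1), $|b_0|\le c_1$) to bound the per-term variance uniformly. Hence that integral is $O(1/N)$.

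Finally I would invoke Gronwall's inequality to propagate $\mathbb{E}\,|x_{t,\theta}^i-x_{t,\theta}^{i,N}|^2 = O(1/N)$ uniformly in $t\in[s,T]$, with a constant $e^{CT}$ that does not depend on $N$ or $\theta$ (the $\theta$-dependence in the control term was killed by (A6), not by a bound on $\theta$). To upgrade this to the $\sup$-inside-the-expectation claimed in the statement, note again that there is no stochastic integral in $x_{t,\theta}^i-x_{t,\theta}^{i,N}$: one has $|x_{t,\theta}^i-x_{t,\theta}^{i,N}|\le \int_s^T|\text{drift difference}|\,dr$, so $\sup_{s\le t\le T}|x_{t,\theta}^i-x_{t,\theta}^{i,N}|^2$ is already controlled by the time-integral of the squared drift difference (Cauchy--Schwarz), and the same two estimates — Lipschitz/Gronwall for the coupling part, conditional-variance for the fluctuation part — give $\mathbb{E}\sup_{s\le t\le T}|x_{t,\theta}^i-x_{t,\theta}^{i,N}|^2=O(1/N)$ directly, with no need for Doob's maximal inequality. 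The main obstacle is purely bookkeeping: making sure the Gronwall constant is genuinely independent of both $N$ and $\theta$, which hinges on using (A6) to remove the control term rather than estimating it by $2\theta$, and on the boundedness of $b_0$ to get a $\theta$-free, $N$-uniform bound on the fluctuation term.
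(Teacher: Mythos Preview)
Your proposal is correct and follows essentially the same route as the paper: apply the product rule to $|x_{t,\theta}^i-x_{t,\theta}^{i,N}|^2$, drop the control-difference term via (A6), split the remaining drift into a Lipschitz coupling piece and an i.i.d.\ fluctuation piece of size $O(1/N)$ (using boundedness of $b_0$), then close with Gronwall. The only step you leave implicit that the paper spells out is the use of permutation invariance of the initial law to pass from the cross terms $\tfrac{1}{N}\sum_j \mathbb{E}|x_{r,\theta}^j-x_{r,\theta}^{j,N}|^2$ (coming from Lipschitz in the second argument of $b_0$) back to $\mathbb{E}|x_{r,\theta}^i-x_{r,\theta}^{i,N}|^2$ before applying Gronwall.
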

\begin{proof}
 
\begin{align*}
d(x_{t,\theta}^i-x_{t,\theta}^{i,N} ) = \left( \int_\mathbb{R} b_0(x_{t,\theta}^i,y) \mu_{t,\theta} (dy)-\frac{1}{N}\sum_{j=1}^N b_0( x_{t,\theta}^{i,N},x_{t,\theta}^{j,N}) + \varphi_\theta (t, x_{t,\theta}^i|\{\mu_{t,\theta}\} )- \varphi_\theta(t, x_{t,\theta}^{i,N}|\{\mu_{t,\theta}\} )\right)  dt,   
\end{align*}
and
\begin{align*}
d(x_{t,\theta}^i-x_{t,\theta}^{i,N}  )^2  &=  \biggl\lbrace 2 (x_{t,\theta}^i-x_{t,\theta}^{i,N}  )  \biggl(\int_\mathbb{R} b_0(x_{t,\theta}^i,y) \mu_{t,\theta} (dy) \\ &\quad  -\frac{1}{N}\sum_{j=1}^N b_0(x_{t,\theta}^{i,N} ,x_{t,\theta}^{j,N} ) + \varphi_\theta (t, x_{t,\theta}^i|\{\mu_{t,\theta}\} )- \varphi_\theta(t, x_{t,\theta}^{i,N}|\{\mu_{t,\theta}\}  ) \biggl)   \biggr\rbrace dt.  
\end{align*}
By Assumption (A6), $(x_{t,\theta}^i-x_{t,\theta}^{i,N} )  \biggl(\varphi_\theta (t, x_{t,\theta}^i|\{\mu_{t,\theta}\} )- \varphi_\theta(t, x_{t,\theta}^{i,N}|\{\mu_{t,\theta}\}  ) \biggl ) \leq 0$. Consequently, for any $t\in[s,T]$,

\begin{align*}
  |x_{t,\theta}^i-x_{t,\theta}^{i,N}|^2  &    
  \leq  \int_s^t 2  | x_{u,\theta}^i-{x_{u,\theta}^{i,N}}| \biggl| \int_\mathbb{R} b_0(x_{u,\theta}^i,y) \mu_{u,\theta}  (dy)-\frac{1}{N}\sum_{j=1}^N b_0( {x_{u,\theta}^{i,N}},x_{u,\theta}^{j,N}) \biggl|du
  \\ & \leq\int_s^t2|x_{u,\theta}^i-x_{u,\theta}^{i,N}|\biggl| \int_\mathbb{R} b_0(x_{u,\theta}^i,y) \mu_{u,\theta} (dy)-\frac{1}{N}\sum_{j=1}^N b_0( x_{u,\theta}^{i},x_{u,\theta}^j) \biggl|du
  \\& \quad +\int_s^t2|x_{u,\theta}^i-x_{u,\theta}^{i,N}|\biggl| \frac{1}{N}\sum_{j=1}^N b_0( x_{u,\theta}^{i},x_{u,\theta}^j) - \frac{1}{N}\sum_{j=1}^N b_0( x_{u,\theta}^{i},x_{u,\theta}^{j,N}) \biggl|du
  \\& \quad +\int_s^t2|x_{u,\theta}^i-x_{u,\theta}^{i,N}|\biggl| \frac{1}{N}\sum_{j=1}^N b_0( x_{u,\theta}^{i},x_{u,\theta}^{j,N}) - \frac{1}{N}\sum_{j=1}^N b_0( x_{u,\theta}^{i,N},x_{u,\theta}^{j,N}) \biggl|du
  \\& \leq \int_s^t2|x_{u,\theta}^i-x_{u,\theta}^{i,N}|\biggl| \int_\mathbb{R} b_0(x_{u,\theta}^i,y) \mu_{u,\theta} (dy)-\frac{1}{N}\sum_{j=1}^N b_0( x_{u,\theta}^{i},x_{u,\theta}^j) \biggl|du
  \\& \quad +\int_s^t2Lip(b_0)|x_{u,\theta}^i-x_{u,\theta}^{i,N}|^2du+\int_s^t\frac{Lip(b_0)}{N}\sum_{j=1}^N2|x_{u,\theta}^i-x_{u,\theta}^{i,N}||x_{u,\theta}^j-x_{t,\theta}^{j,N}|du
  \\& \leq  \int_s^t\biggl| \int_\mathbb{R} b_0(x_{u,\theta}^i,y) \mu_{u,\theta} (dy)-\frac{1}{N}\sum_{j=1}^N b_0( x_{u,\theta}^{i},x_{u,\theta}^j) \biggl|^2du
  \\& \quad +\int_s^t[1+3Lip(b_0)]|x_{u,\theta}^i-x_{u,\theta}^{i,N}|^2du+\int_s^t\frac{Lip(b_0)}{N}\sum_{j=1}^N|x_{u,\theta}^j-x_{u,\theta}^{j,N}|^2du.
  \end{align*}
By the assumption that the initial distribution among $N$ players is permutation invariant, 
\begin{align*}
  \mathbb{E}  |x_{t,\theta}^{i }-x_{t,\theta}^{i}|^2 
  \leq  & [1+ 4Lip(b_0)] \mathbb{E} \int_s^t |  x_{u,\theta}^{i }-x_{u,\theta}^{i,N}|^2 du\\& + \mathbb{E} \int_s^t   \biggl| \int_\mathbb{R} b_0(x_{u,\theta}^{i },y) \mu_{u,\theta} (dy)-\frac{1}{N}\sum_{j=1}^N b_0( x_{u,\theta}^{i},x_{u,\theta}^{j}) \biggl|^2du, 
\end{align*}
and $x_{\cdot,\theta}^{i}$'s are now i.i.d.. Due to the boundedness of $b_0$, $$\mathbb{E}    \biggl| \int_\mathbb{R} b_0(x_{t,\theta}^{i },y) \mu_{t,\theta} (dy)-\frac{1}{N}\sum_{j=1}^N b_0( x_{t,\theta}^{i},x_{t,\theta}^{j}) \biggl|^2=\epsilon_N^2=O\left(\frac{1}{N}\right).$$
Consequently, 
\begin{align*}
 \mathbb{E}  | x_{t,\theta}^{i }-x_{t,\theta}^{i,N} |^2   
 &\leq   \mathbb{E}  \int_s^t (1+4Lip(b_0)  )    | x_{u,\theta}^{i}-x_{u,\theta}^{i,N} |^2 du+  \epsilon_N^2 du.  
\end{align*}
By Gronwall's inequality,
$$ \mathbb{E}  | x_{t,\theta}^{i }-x_{t,\theta}^{i,N} |^2\leq   \int_s^t \epsilon_N^2 du  \cdot \mathbb{E}  \biggl[\exp(\int_s^t [1+4Lip(b_0) ] du) \biggl]\leq \epsilon_N^2\cdot T\cdot\exp\left\{T[1+4Lip(b_0)]\right\},
$$
and hence,
\begin{equation*}
\mathbb{E} \sup_{s \leq t \leq T}  |x_{t,\theta}^{i}-x_{t,\theta}^{i,N}|^2 \leq \epsilon_N^2\cdot T\cdot\exp\left\{T[1+4Lip(b_0)]\right\} = O \biggl( \frac{1}{N} \biggl).
\end{equation*}
Therefore, $ \mathbb{E} \sup_{s\leq t\leq T} \limits  |x_{t,\theta}^{i}-x_{t,\theta}^{i,N}|^2 = O\left( \frac{1}{N} \right)$.
\end{proof}

Suppose that the first player chooses a different control   $\xi_t' $ which is of a bounded velocity and all other players $i=2,3,\ldots, N$ choose to stay with the optimal control  $\{\xi_{t,\theta}\}$. Denote   $$d\xi_t' = \dot{\xi}_t' dt = \varphi'(t,x) dt, \quad   \text{ and } \quad  d\xi_{t,\theta} = \dot{\xi}_{t,\theta} dt = \varphi_\theta (t,x|\{\mu_{t,\theta}\} ) dt.$$  

Then the corresponding dynamics for the MFG is 
\begin{align*}
d \tilde{x}_{t,\theta}^1 &=     \biggl( b (\tilde{x}_{t,\theta}^1,\mu_{t,\theta}) + \varphi'(t,\tilde{x}_{t,\theta}^1)   \biggl)  dt + \sigma dW_t^1
\end{align*}
 The corresponding dynamics for $N$-player game are 
\begin{align*}
d\tilde{x}_{t,\theta}^{1,N}  &= \left( \frac{1}{N}\sum_{j=1}^N b_0( \tilde{x}_{t,\theta}^{1,N} ,\tilde{x}_{t,\theta}^{j,N}) +  \varphi'(t,\tilde{x}_{t,\theta}^{1,N})\right)  dt + \sigma dW_t^1,
\\d\tilde{x}_{t,\theta}^{i,N} &= \left( \frac{1}{N}\sum_{j=1}^N b (\tilde{x}_{t,\theta}^{i,N},\tilde{x}_{t,\theta}^{j,N}) +  
\varphi_\theta (t,\tilde{x}_{t,\theta}^{i,N}|\{\mu_{t,\theta}\} )\right)  dt + \sigma dW_t^i, \quad \quad \quad 2 \leq i \leq N.
\end{align*} 
We first show 
\begin{lemma}\label{Lemma-Nash}
$  \sup_{2 \leq i \leq N} \limits \mathbb{E} \sup_{0\leq t\leq T} \limits |x_{t,\theta}^{i,N}-\tilde{x}_{t,\theta}^{i,N}| \leq O \biggl(\frac{1}{\sqrt{N}} \biggl) $.  
\end{lemma}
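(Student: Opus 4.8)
The plan is to compare the two $N$-particle systems pathwise, using that for each $i$ the processes $x_{t,\theta}^{i,N}$ and $\tilde x_{t,\theta}^{i,N}$ are driven by the \emph{same} Brownian motion $W^i$ and issued from the same initial state. Hence $\Delta_t^i:=x_{t,\theta}^{i,N}-\tilde x_{t,\theta}^{i,N}$ is absolutely continuous, vanishes at the initial time, and carries no martingale part, so that $d(\Delta_t^i)^2=2\Delta_t^i\,d\Delta_t^i$ with no It\^o correction. The sole discrepancy between the two systems is that in the $\tilde x$-system player $1$ uses the control function $\varphi'$ in place of $\varphi_\theta$, and this feeds into the equations of players $i\ge2$ only through the empirical average $\frac1N\sum_j b_0(\cdot,\cdot)$, that is, with weight $1/N$ --- this is the mechanism that produces the $1/\sqrt N$ rate.

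First I would write, for $i\ge2$, the differential inequality for $(\Delta_t^i)^2$. The term $2\Delta_t^i\big(\varphi_\theta(t,x_{t,\theta}^{i,N}|\{\mu_{t,\theta}\})-\varphi_\theta(t,\tilde x_{t,\theta}^{i,N}|\{\mu_{t,\theta}\})\big)$ is $\le0$ by Assumption (A6), and the interaction term is controlled by the Lipschitz property of $b_0$ together with $2ab\le a^2+b^2$, giving
\[
d(\Delta_t^i)^2\le\Big(3\,Lip(b_0)\,(\Delta_t^i)^2+Lip(b_0)\,S_t\Big)\,dt,\qquad S_t:=\tfrac1N\sum_{j=1}^N(\Delta_t^j)^2 .
\]
For $i=1$ the monotonicity is unavailable, but $\varphi_\theta$ and $\varphi'$ are both bounded by $\theta$, so the extra term is at most $2|\Delta_t^1|\cdot2\theta\le(\Delta_t^1)^2+4\theta^2$, whence $d(\Delta_t^1)^2\le\big((3Lip(b_0)+1)(\Delta_t^1)^2+Lip(b_0)S_t+4\theta^2\big)dt$. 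Averaging the $N$ inequalities and using $\frac1N(\Delta_t^1)^2\le S_t$ collapses everything into a closed scalar bound $dS_t\le C\,S_t\,dt+\tfrac{4\theta^2}{N}\,dt$ with $C$ depending only on $Lip(b_0)$; since $S$ vanishes at the initial time, Gronwall gives $\mathbb E[S_t]\le\tfrac{4\theta^2T}{N}e^{CT}=O(1/N)$, uniformly in $t$.

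Next I would substitute this estimate back into the inequality for $i\ge2$: taking expectations and applying Gronwall again gives $\mathbb E[(\Delta_t^i)^2]=O(1/N)$ for each $t$, uniformly in $i\ge2$. Because the drift of $(\Delta_t^i)^2$ is bounded above by the nonnegative quantity $3Lip(b_0)(\Delta_u^i)^2+Lip(b_0)S_u$, one has, pathwise, $\sup_{0\le t\le T}(\Delta_t^i)^2\le\int_0^T\big(3Lip(b_0)(\Delta_u^i)^2+Lip(b_0)S_u\big)du$; taking expectations and inserting the two bounds above yields $\mathbb E\sup_{0\le t\le T}(\Delta_t^i)^2=O(1/N)$. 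Finally, by Jensen's inequality for the concave square root, $\mathbb E\sup_{0\le t\le T}|\Delta_t^i|\le\big(\mathbb E\sup_{0\le t\le T}(\Delta_t^i)^2\big)^{1/2}=O(1/\sqrt N)$, and since no constant depends on $i$, the bound is uniform over $2\le i\le N$.

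I do not expect a genuine obstacle --- the computation parallels that of Lemma~\ref{Nash1} --- but the point deserving care is the deviating player: the $O(\theta^2)$ forcing term is generated in player $1$'s equation, where (A6) cannot be invoked, and one must keep it isolated and observe that it reaches the remaining players only after division by $N$ inside the empirical interaction, which is precisely why it costs only $O(1/\sqrt N)$ for $i\ge2$. Tracking the constants so that they stay $\theta$-dependent (for the fixed $\theta$) but $N$-independent is the only mildly delicate bookkeeping.
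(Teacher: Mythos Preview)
Your proposal is correct and follows essentially the same strategy as the paper: use (A6) to drop the control increment for $i\ge 2$, exploit the Lipschitz property of $b_0$ together with $2ab\le a^2+b^2$, observe that the deviating player's effect enters the other equations only with weight $1/N$, and close with Gronwall. The only organizational difference is that the paper applies Gronwall directly to $\sup_{2\le i\le N}\mathbb{E}\sup_{s\le t'\le t}|\Delta_{t'}^i|^2$, treating $\mathbb{E}|\Delta_t^1|^2$ as a bounded forcing term (which is implicit from the boundedness of $b_0$ and of both control functions by $\theta$), whereas you first close a scalar inequality on the empirical average $S_t=\tfrac1N\sum_j(\Delta_t^j)^2$ and then feed it back into the $i\ge 2$ estimate; both routes yield the same $O(1/N)$ bound on the squared difference and hence $O(1/\sqrt N)$ after Jensen.
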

\begin{proof}
For any $2 \leq i \leq N$, 
\begin{align*}
 d(x_{t,\theta}^{i,N}-\tilde{x}_{t,\theta}^{i,N})    = \left[ \frac{1}{N}\sum_{j=1}^N  \left( b_0(x_{t,\theta}^{i,N},x_{t,\theta}^{j,N})-b_0(\tilde{x}_{t,\theta}^{i,N},\tilde{x}_{t,\theta}^{j,N}) \right) +    \varphi_\theta (t,x_{t,\theta}^{i,N}|\{\mu_{t,\theta}\} )  -\varphi_\theta(t,\tilde{x}_{t,\theta}^{i,N}|\{\mu_{t,\theta}\} )\right]  dt.
\end{align*} 
Because $\varphi_\theta (t,x|\{\mu_{t,\theta}\} )$ is nonincreasing in $x$, 
\begin{align*}
 |x_{T,\theta}^{i,N}-\tilde{x}_{T,\theta}^{i,N}|^2 &\leq   \int_s^T   2 (x_{t,\theta}^{i,N}- \tilde{x}_{t,\theta}^{i,N})\left( \frac{1}{N}\sum_{j=1}^N  \left(b_0(x_{t,\theta}^{i,N},x_{t,\theta}^{j,N})-b_0(\tilde{x}_{t,\theta}^{i,N},\tilde{x}_{t,\theta}^{j,N}) \right)   \right)   dt
\\ &\leq   \int_s^T   2 (x_{t,\theta}^{i,N}-\tilde{x}_{t,\theta}^{i,N})  \frac{1}{N}\sum_{j=1}^N  Lip(b_0) \biggl( |x_{t,\theta}^{i,N} - \tilde{x}_{t,\theta}^{i,N}|+ |x_{t,\theta}^{j,N}-\tilde{x}_{t,\theta}^{j,N}|  \biggl)  dt\allowdisplaybreaks
\\ &\leq  2 Lip(b_0)  \int_s^T    |x_{t,\theta}^{i,N}-\tilde{x}_{t,\theta}^{i,N}|^2 + |x_{t,\theta}^{i,N}-\tilde{x}_{t,\theta}^{i,N}| \frac{1}{N}\sum_{j=1}^N    |x_{t,\theta}^{j,N}-\tilde{x}_{t,\theta}^{j,N}|    dt
\\ &\leq  2 Lip(b_0)  \int_s^T    |x_{t,\theta}^{i,N}-\tilde{x}_{t,\theta}^{i,N}|^2 +  \frac{1}{2N}\sum_{j=1}^N  \biggl( |x_{t,\theta}^{i,N}-\tilde{x}_{t,\theta}^{i,N}|^2+  |x_{t,\theta}^{j,N}-\tilde{x}_{t,\theta}^{j,N}|^2  \biggl) dt 
\\ &\leq    Lip(b_0)  \int_s^T  3  |x_{t,\theta}^{i,N}-\tilde{x}_{t,\theta}^{i,N}|^2 +  \frac{1}{ N}\sum_{j=1}^N  |x_{t,\theta}^{j,N}-\tilde{x}_{t,\theta}^{j,N}|^2   dt  ,
\end{align*}
and 
\begin{align*}
 \sup_{2\leq i\leq N}  \limits &  \mathbb{E} \sup_{s\leq t\leq T} \limits |x_{t,\theta}^{i,N}-\tilde{x}_{t,\theta}^{i,N}|^2 
 \\ & \leq Lip(b_0) \int_s^T [  \sup_{2\leq i\leq N} \limits  \mathbb{E} \sup_{s\leq t'\leq t} \limits 3 |x_{t',\theta}^{i,N}-\tilde{x}_{t',\theta}^{i,N}|^2 \\& \quad \quad + \frac{N-1}{N} \sup_{2\leq j\leq N} \limits \mathbb{E} \sup_{s\leq t' \leq t} \limits |x_{t',\theta}^{j,N}-\tilde{x}_{t',\theta}^{j,N}|^2+ \frac{1}{N}\mathbb{E} |x_{t,\theta}^{1,N}-\tilde{x}_{t,\theta}^{1,N}|^2  ]  dt
\\&=  Lip(b_0) \int_s^T  \left[ \frac{4N-1}{N} \sup_{2\leq i\leq N} \limits \mathbb{E} \sup_{s\leq t' \leq t} \limits   |x_{t',\theta}^{i,N}-\tilde{x}_{t',\theta}^{i,N}|^2 +  \frac{1}{N} \mathbb{E}|x_{t,\theta}^{1,N}-\tilde{x}_{t,\theta}^{1,N}|^2 \right]  dt.
\end{align*}
By  Gronwall's inequality, 
\begin{align*}
 \sup_{2\leq i\leq N} \limits \mathbb{E} \sup_{s\leq t\leq T} \limits |x_{t,\theta}^{i,N}-\tilde{x}_{t,\theta}^{i,N}|^2 \leq Lip(b_0) \int_s^T \frac{1}{N} \mathbb{E} |x_{t,\theta}^{1,N}-\tilde{x}_{t,\theta}^{1,N}|^2 dt \cdot e^{\int_0^T Lip(b_0) \frac{4N-1}{N} dt} =O \left( \frac{1}{N} \right).
\end{align*}
So, $\sup_{2\leq i\leq N} \limits  \mathbb{E} \sup_{s\leq t\leq T} \limits |x_{t,\theta}^{i,N}-\tilde{x}_{t,\theta}^{i,N}|=O \biggl(\frac{1}{\sqrt{N}} \biggl). $
\end{proof}

\begin{proof}[Proof of Main Theorem a)]
By Lemma \ref{Nash1}, for any $2 \le i \le N$, 
$  \sup_{s\leq t\leq T} \limits \mathbb{E} |x_{t,\theta}^{i }-x_{t,\theta}^{i,N} | = O \left(\frac{1}{\sqrt{N}} \right)$, and by the triangle inequality, $\sup_{2 \leq i \leq N} \limits \mathbb{E} \sup_{s\leq t\leq T} \limits |x_{t,\theta}^{i}-\tilde{x}_{t,\theta}^{i,N}| = O(\frac{1}{\sqrt{N}})$.
Therefore, 
\begin{equation*}
\sup_{2 \leq i \leq N} \limits \mathbb{E} \sup_{s\leq t\leq T} \limits |x_{t,\theta}^{ i}-\tilde{x}_{t,\theta}^{i, N}|  + \sup_{1 \leq i \leq N} \limits \mathbb{E} \sup_{s\leq t\leq T} \limits |x_{t,\theta}^{ i}-x_{t,\theta}^{ i,N}| = O \left(\frac{1}{\sqrt{N}} \right).
\end{equation*}
 
Finally, define
 \begin{align*}
 d\bar{x}_{t,\theta}^{1,N}  &= \left( \frac{1}{N}\sum_{j=1}^N b_0( \bar{x}_{t,\theta}^{1,N} ,x_{t,\theta}^{j}) +  \varphi'(t,\bar{x}_{t,\theta}^{1,N})\right)  dt + \sigma dW_t^1,
 \end{align*}
 Since $(x-y)(\varphi'(t,x)-\varphi'(t,y)) \leq 0$ by Assumption (A6), then a similar proof as that for Lemma~\ref{Nash1} shows  
 $\mathbb{E} \sup_{0\leq t\leq T} \limits |\tilde{x}_{t,\theta}^{1,N}-\bar{x}_{t,\theta}^{1,N}| = O \left(\frac{1}{\sqrt{N}} \right)$ and $ \mathbb{E} \sup_{0\leq t\leq T} \limits |\bar{x}_{t,\theta}^{1,N}-\tilde{x}_{t,\theta}^{1 }| = O\left( \frac{1}{\sqrt{N}} \right)$.
 Therefore,
\begin{align*}
&E_{x_{s-,\theta}^{N}}\left[J^{1,N}_{\theta}(s,x_{s-,\theta}^{N},\xi_\cdot^{'+},\xi_\cdot^{'-};\xi_{\cdot,\theta}^{-1}|\{\mu_{t,\theta}\} ) \right]
\\&= \mathbb{E} \left[ \int_s^T \frac{1}{N}  \sum_{j=1}^N f_0(\tilde{x}_{t,\theta}^{1,N}, \tilde{x}_{t,\theta}^{j,N})  + \gamma_1 \varphi'_1(t,\tilde{x}_{t,\theta}^{1,N})+\gamma_2 \varphi'_2(t,\tilde{x}_{t,\theta}^{1,N})  dt \right]
\\&\geq \mathbb{E} \left[ \int_s^T \frac{1}{N}  \sum_{j=1}^N f_0(\tilde{x}_{t,\theta}^{1,N},x_{t,\theta}^{j })  +\gamma_1 \varphi'_1(t,\tilde{x}_{t,\theta}^{1,N})+\gamma_2 \varphi'_2(t,\tilde{x}_{t,\theta}^{1,N})  dt \right] -O\left(\frac{1}{\sqrt{N}} \right)
\\&\geq \mathbb{E} \left[ \int_s^T \frac{1}{N}  \sum_{j=1}^N f_0(\bar{x}_{t,\theta}^{1,N}, x_{t,\theta}^j) +\gamma_1 \varphi'_1(t,\bar{x}_{t,\theta}^{1,N})+\gamma_2 \varphi'_2(t,\bar{x}_{t,\theta}^{1,N})  dt \right] -O\left(\frac{1}{\sqrt{N}} \right)
\\&\geq \mathbb{E} \left[ \int_s^T  \int_\mathbb{R}  f_0 (\tilde{x}_{t,\theta}^{1},y) \mu_{t,\theta} (dy) + \gamma_1 \varphi'_1(t,\tilde{x}_{t,\theta}^{1}) + \gamma_2 \varphi'_2(t,\tilde{x}_{t,\theta}^{1})  dt \right] -O\left(\frac{1}{\sqrt{N}} \right)
\\&\geq \mathbb{E} \left[ \int_s^T   \int_\mathbb{R}  f_0 (x_{t,\theta}^{1} ,y) \mu_{t,\theta} (dy) + \gamma_1 \varphi_{1,\theta} (t,{x}_{t,\theta}^{1}|\{\mu_{t,\theta}\} )+\gamma_2 \varphi_{2,\theta} (t,{x}_{t,\theta}^{1}|\{\mu_{t,\theta}\} )  dt \right] -O\left(\frac{1}{\sqrt{N}} \right)
\\& = \mathbb{E} \left[ \int_s^T  \frac{1}{N}  \sum_{j=1}^N f_0(x_{t,\theta}^{1,N} ,x_{t,\theta}^{j,N} )  +\gamma_1 \varphi_{1,\theta} (t,{x}_{t,\theta}^{1,N}|\{\mu_{t,\theta}\} )+\gamma_2 \varphi_{2,\theta} (t,{x}_{t,\theta}^{1,N}|\{\mu_{t,\theta}\} )   dt \right] -O\left(\frac{1}{\sqrt{N}} \right)
\\& = E_{x_{s-,\theta}^{N}}\left[{J^{1,N}_{\theta}}(s,x_{s-,\theta}^N,\xi_{\cdot,\theta}^{ +},\xi_{\cdot,\theta}^{ -};\xi_{\cdot,\theta}^{ -1}|\{\mu_{t,\theta}\} )\right] -O\left(\frac{1}{\sqrt{N}} \right),   
\end{align*}  
where the last inequality is due to the optimality of $\varphi$ for problem (\ref{MFGbounded1}), and the last equality follows a similar proof of Lemma \ref{Nash1}.
\end{proof}
 
\begin{proof}[Proof of Main Theorem b)]
Let all  players except player 1 choose the  optimal controls $(\xi_{\cdot,\theta}^+,\xi_{\cdot,\theta}^-) $, let player one choose any other controls  $(\xi_{\cdot}^{'+},\xi_{\cdot}^{'-}) \in \mathcal{U}$.
Denote $$d \xi_t'= d \varphi ' (t,x )=   d \varphi_1' (t,x )-d \varphi_2' (t,x ), $$
 \begin{align*}
d\tilde{x}_{t}^1 &=   b (\tilde{x}_{t}^1, \mu_{t,\theta} ) dt +d\varphi_1'(t,\tilde{x}_{t}^1)  - d\varphi_2'(t,\tilde{x}_{t}^1 )   + \sigma dW_t^1 \quad \tilde{x}_{s-}^1 = x,\\
d\tilde{x}_{t}^{1,N} &=  \frac{1}{N} \sum_{ j = 1,\ldots, N} b_0(\tilde{x}_{t}^{1,N}, \tilde{x}_{t,}^{j,N} ) dt +d\varphi_1'(t,\tilde{x}_{t}^{1,N} ) -d \varphi_2'(t,\tilde{x}_{t}^{1,N})  + \sigma dW_t^1, \quad \tilde{x}_{s-}^{1,N} = x,  
\\d\tilde{x}_{t}^{i,N} &= \biggl( \frac{1}{N} \sum_{ j = 1,\ldots, N} b_0(\tilde{x}_{t}^{i,N}, \tilde{x}_{t}^{j,N} )  +\varphi_{1,\theta}(t,\tilde{x}_{t}^{i,N}|\{\mu_{t,\theta}\}  ) - \varphi_{2,\theta}(t,\tilde{x}_{t}^{i,N} |\{\mu_{t,\theta}\} )  \biggl) dt + \sigma dW_t^i, \quad x_{s-}^{i,N} = x, 
\\& \text{ for  } i = 2,\ldots,N.
\end{align*}
Then, 
\begin{align*}
 d(x_{t,\theta}^{i,N}-\tilde{x}_{t}^{i,N})    = \left[ \frac{1}{N}\sum_{j=1}^N  \left( b_0(x_{t,\theta}^{i,N},x_{t,\theta}^{j,N})-b_0(\tilde{x}_{t}^{i,N},\tilde{x}_{t}^{j,N}) \right) +    \varphi_\theta(t,x_{t,\theta}^{i,N}|\{\mu_{t,\theta}\} )  -\varphi_\theta (t,\tilde{x}_{t}^{i,N}|\{\mu_{t,\theta}\} )\right]  dt.
\end{align*} 
By definition, $\varphi_\theta (t,x|\{\mu_{t,\theta}\} )$ is nonincreasing in $x$. Hence, a similar  proof to the one for Lemma \ref{Lemma-Nash}  yields

\begin{equation} \label{Lemma-Nash2}
 \sup_{2 \leq i \leq N} \limits \mathbb{E} \sup_{s\leq t\leq T} \limits |x_{t,\theta}^{i,N}-\tilde{x}_{t}^{i,N}| = O \biggl(\frac{1}{\sqrt{N}} \biggl) 
\end{equation} 

From Lemma \ref{Nash1} and  the triangle inequality, $\sup_{2 \leq i \leq N} \limits \mathbb{E} \sup_{s\leq t\leq T} \limits |x_{t,\theta}^{i}-\tilde{x}_{t}^{i,N}| = O \biggl(\frac{1}{\sqrt{N}} \biggl)$.
Therefore, 
\begin{equation*}
\sup_{2 \leq i \leq N} \limits \mathbb{E} \sup_{s\leq t\leq T} \limits |x_{t,\theta}^{ i}-\tilde{x}_{t}^{i, N}|  + \sup_{2 \leq i \leq N} \limits \mathbb{E} \sup_{s\leq t\leq T} \limits |x_{t,\theta}^{ i}-x_{t,\theta}^{ i,N}| = O \left(\frac{1}{\sqrt{N}} \right).
\end{equation*}
Since  $d\varphi'(t,x) $ is also nonincreasing in $x$, then again the same proof as that for Lemma~\ref{Nash1} shows  
 $$\mathbb{E} \sup_{s\leq t\leq T} \limits | \tilde{x}^{ 1 ,N}_{t}-\tilde{x}_{t}^1| = O \left(\frac{1}{\sqrt{N}} \right).$$
By the Lipschitz continuity of $f,f_0$,
\begin{align*}
&E_{x_{s-}^N}\left[J^{1,N} (s,x_{s-}^N ,\xi_\cdot^{'+},\xi_\cdot^{'-};\xi_{\cdot,\theta}^{-1}|\{\mu_{t,\theta}\} ) \right]
\\&= \mathbb{E} \left[ \int_s^T \frac{1}{N}  \sum_{j=1}^N f_0(\tilde{x}_{t}^{1, N}, \tilde{x}_{t}^{j, N})dt +\gamma_1 d\varphi'_1(t,\tilde{x}_{t}^{1, N} )  + \gamma_2 d\varphi'_2(t,\tilde{x}_{t}^{1, N} )  \right]
\\&\geq \mathbb{E} \left[ \int_s^T \frac{1}{N}  \sum_{j=1}^N f_0(\tilde{x}_{t}^{1, N} , x_{t,\theta}^{ j  })dt  +\gamma_1 d\varphi'_1(t,\tilde{x}_{t}^{1, N} )  + \gamma_2 d\varphi'_2(t,\tilde{x}_{t}^{1, N} )   \right] -O\left(\frac{1}{\sqrt{N}} \right)
\\&\geq \mathbb{E} \left[ \int_s^T \int_\mathbb{R} f(\tilde{x}_{t}^{1, N} , y) \mu_{t,\theta}(dy) dt + \gamma_1 d\varphi'_1(t,\tilde{x}_{t}^{1, N})  +\gamma_2 d\varphi'_2(t,\tilde{x}_{t}^{1, N} )   \right] -O\left(\frac{1}{\sqrt{N}} \right)   
\\&\geq \mathbb{E} \left[ \int_s^T \int_\mathbb{R} f(\tilde{x}_{t}^{1 } , y) \mu_{t,\theta} (dy) dt + \gamma_1 d\varphi'_1(t,\tilde{x}_{t}^{1,N} )  +\gamma_2 d\varphi'_2(t,\tilde{x}_{t}^{1,N } )    \right] -O\left(\frac{1}{\sqrt{N}} \right).
\end{align*} 
By definitions of $\tilde{x}_{t }^{1 }$ and $\tilde{x}_{t}^{1,N }$, 
\begin{align}\label{notc}
\begin{split}
& \mathbb{E}  \left| d\varphi'_1(t,\tilde{x}_{t}^{1,N } )    -d\varphi'_1(t,\tilde{x}_{t}^{1 } )-d\varphi'_2(t,\tilde{x}_{t}^{1,N } )    +d\varphi'_2(t,\tilde{x}_{t}^{1 } ) \right|    
\\& \leq \mathbb{E}  d | \tilde{x}_{t}^{1,N } -\tilde{x}_{t}^{1}  | +  \mathbb{E} \left| \frac{1}{N} \sum_{ j = 1,\ldots, N} b_0(\tilde{x}_{t}^{1,N } , \tilde{x}_{t}^{j,N }  )  -  b (\tilde{x}_t^{1 }, \mu_{t,\theta }   ) \right| dt
  =   O\left(\frac{1}{\sqrt{N}} \right),   
  \end{split}
\end{align}
and  by  definition of $\varphi_1',\varphi_2'$, 
\begin{align*}
&\left| \left( d\varphi'_1(t,\tilde{x}_{t}^{1,N }  )    -d\varphi'_1(t,\tilde{x}_{t}^{1 } ) \right)+\left( - d\varphi'_2(t,\tilde{x}_{t}^{1,N }  )    +d\varphi'_2(t,\tilde{x}_{t}^{1} ) \right)\right| 
\\ = &  \left|   d\varphi'_1(t,\tilde{x}_{t}^{1,N }  )    -d\varphi'_1(t,\tilde{x}_{t}^{1 } ) \right|+\left| - d\varphi'_2(t,\tilde{x}_{t}^{1,N }  )    +d\varphi'_2(t,\tilde{x}_{t}^{1 } )\right|. 
\end{align*}
Therefore,
$$  \mathbb{E}  \sup_{s\leq t\leq T} \left|   d\varphi'_1(t,\tilde{x}_{t}^{1,N }  )    -d\varphi'_1(t,\tilde{x}_{t}^{1 } ) \right|  = O\left(\frac{1}{\sqrt{N}} \right),$$    $$ \mathbb{E}  \sup_{s\leq t\leq T}\left| - d\varphi'_2(t,\tilde{x}_{t}^{1,N }  )    +d\varphi'_2(t,\tilde{x}_{t}^{1 } )\right|=  O\left(\frac{1}{\sqrt{N}} \right),$$
and
\begin{align*}
 & \mathbb{E} \left[ \int_s^T \int_\mathbb{R} f(\tilde{x}_{t}^{1 }  , y) \mu_{t,\theta}(dy) dt + \gamma_1 d\varphi'_1(t,\tilde{x}_{t}^{1,N }  )  + \gamma_2 d\varphi'_2(t,\tilde{x}_{t}^{1,N } )   \right] -O\left(\frac{1}{\sqrt{N}} \right)  
   \\   & \geq  \mathbb{E} \left[ \int_s^T \int_\mathbb{R} f(\tilde{x}_{t}^{1  } , y) \mu_{t,\theta} (dy) dt  +\gamma_1 d\varphi'_1(t,\tilde{x}_{t}^{1}  )  + \gamma_2  d\varphi'_2(t,\tilde{x}_{t}^{1 } )   \right] -O\left(\frac{1}{\sqrt{N}} \right)
   \\   & \geq  \mathbb{E} \left[ \int_s^T \int_\mathbb{R} f(x_{t}^{1 } , y) \mu_{t,\theta }  (dy) dt  + \gamma_1 d\varphi_{1} (t,x_{t}^{1 } |\{\mu_{t,\theta}\}  )  + \gamma_2 d\varphi_{2}(t,x_{t}^{1 } |\{\mu_{t,\theta}\}  )   \right] -O\left(\frac{1}{\sqrt{N}} \right)
   \\ & = v (s,x |\{\mu_{t,\theta }  \}) -O\left(\frac{1}{\sqrt{N}} \right).
\end{align*}
The last inequality is due to the optimality of $\varphi$.

Now, by Theorem \ref{thetainfty},  $$ \biggl|v_{\theta} (s,x |\{\mu_{t,\theta}\}) - v (s,x |\{\mu_{t,\theta} \}) \biggl| \le \epsilon_\theta.$$ 
Hence, by $   \mathbb{E} \sup_{s\leq t\leq T} \limits |x_{t,\theta }^{i} -x_{t,\theta}^{i,N } | = \epsilon_N $  and by the analysis  as in the previous steps
\begin{align*}
&E_{x_{s-}^N}\left[J^{1,N} (s,x_{s-}^N ,\xi_\cdot^{'+},\xi_\cdot^{'-};\xi_{\cdot,\theta}^{-1}|\{\mu_{t,\theta}\} ) \right] = E_{x_{s-}^N}[v(s,x_{s-}^N| \{\mu_{t,\theta }    \})] -\epsilon_N  
 \\ & \geq E_{x_{s-}^N}[v_{\theta} ( s,x_{s-}^N|\{\mu_{t,\theta}  \})] - (\epsilon_N+ \epsilon_\theta ) 
\\&= \mathbb{E} \left[ \int_s^T \int_\mathbb{R} f(x_{t,\theta}^{1  }  , y) \mu_{t,\theta}  (dy) dt  + \gamma_1 d\varphi_{1,\theta}(t,x_{t,\theta}^{1 }|\{\mu_{t,\theta}\}  )  + \gamma_2 d\varphi_{2,\theta } (t,x_{t,\theta}^{1 }|\{\mu_{t,\theta}\}  )   \right] -(\epsilon_N+ \epsilon_\theta ) 
\\& \geq  \mathbb{E} \left[ \int_s^T  \frac{1}{N} \sum_{j = 1}^N f_0(x_{t,\theta}^{1,N }  , x_{t,\theta}^{j,N} )  dt  +\gamma_1 d\varphi_{1,\theta} (t,x_{t,\theta}^{1,N } |\{\mu_{t,\theta}\}  )  + \gamma_2 d\varphi_{2,\theta}(t,x_{t,\theta}^{1,N } |\{\mu_{t,\theta}\}  )   \right] -(\epsilon_N+ \epsilon_\theta ) 
\\& =E_{x_{s-}^N}\left[J^{1,N} (s,x_{s-}^N ,\xi_{\cdot,\theta}^{ +},\xi_{\cdot,\theta}^{ -};\xi_{\cdot,\theta}^{ -1}|\{\mu_{t,\theta}\} )\right]-(\epsilon_N+ \epsilon_\theta ) . 
\end{align*}
\end{proof}

\section{Conclusion and discussion}
In this paper, we study the approximation of $N$-player stochastic games with singular controls by a proper model of MFGs with singular control of bounded velocity. In particular, under a set of strategies derived from the MFG solution, the corresponding game value of the $N$-player game with singular controls will deviate from that under NE strategies by at most an error term $\epsilon$; for $N$-player games with singular controls of bounded velocity, this error term $\epsilon = \epsilon_N$ solely depends on the number of players $N$ and $\epsilon_N=O\left(\frac{1}{\sqrt{N}}\right)$; with singular controls of finite variation, this error term $\epsilon$ can be decomposed into $\epsilon=\epsilon_N+\epsilon_\theta$, where $\epsilon_N=O\left(\frac{1}{\sqrt{N}}\right)$ and $\epsilon_\theta$ will vanish when the velocity bound $\theta$ tends to infinity. This finding enriches the literature on the relation between MFGs and $N$-players games in terms of how well MFG models could approximate the corresponding $N$-player games, even when the control processes are not continuous. 

We also notice that there is another direction of approximation one could study: starting from NEs of $N$-player games, whether they will converge to the MFG solutions as $N$ tends to infinity. There have been some works in this direction. For instance, it was shown in \cite{Lacker2016} that the $N$-player open-loop NEs could converge to the mean-field limit in a weak sense of mixed mean-field equilibria; subsequently in \cite{Lacker2020} a closed-case was considered. In \cite{Card2017}, the NE to the $N$-player game was seen as the solution to a system of coupled-HJB equations and its limit as a mean-field system with local coupling was analyzed in terms of propagation of chaos. A special case of time games was studied in \cite{NMT2020} where both the $N$-player game and the mean field game exhibit multiple NEs; it pointed out a transversality condition playing an important role for the mean-field system being the limit of the $N$-player game; concurrently, \cite{CPFP2019} also studied this convergence issue without uniqueness. More recently, \cite{LT2020} studied this convergence for both non-cooperative and cooperative game through propagation of chaos. The majority of the existing works consider the case of continuous controls. It remains to be explored what would happen when non-continuous controls are allowed. 

\end{document}